\newtheorem{thm}{Theorem}[section]
\newtheorem{Con}[thm]{Conjecture}
\newtheorem{lem}[thm]{Lemma}
\newtheorem{pro}[thm]{Proposition}
\theoremstyle{definition}
\newtheorem{rem}{Remark}
\numberwithin{equation}{section}
\renewcommand{\le}{\leqslant}
\renewcommand{\leq}{\leqslant}
\renewcommand{\geq}{\geqslant}
\newcommand{\A}{\mathcal{A}}
\newcommand{\Ld}{\mathcal{L}_d}
\newcommand{\Lrand}{\mathcal{L}_{\textup{rand}}}
\newcommand{\X}{\mathbb{X}}
\newcommand{\ex}{\mathbb{E}}
\newcommand{\re}{\textup{Re}}
\newcommand{\im}{\textup{Im}}
\newcommand{\pr}{\mathbb{P}}
\newcommand{\ep}{\varepsilon}
\newcommand{\mc}{\mathcal}
\newcommand{\Df}{\mathcal{D}(x)}
\newcommand{\D}{\mathcal{D}}
\newcommand{\Ds}
{\widetilde{\mathcal{D}}(x)}
\newcommand{\mb}{\mathbb}
\newcommand{\newabstract}[1]{%
  \par\bigskip
  \csname otherlanguage*\endcsname{#1}%
  \csname captions#1\endcsname
  \item[\hskip\labelsep\scshape\abstractname.]
}
\begin{document}

\baselineskip=17pt

\title[]{Real zeros of $L'(s, \chi_d)$}

\author{Youness Lamzouri}
\author{Kunjakanan Nath}

\address{Universit\'e de Lorraine, CNRS, IECL,  and  Institut Universitaire de France,
F-54000 Nancy, France}

\email{youness.lamzouri@univ-lorraine.fr}

\address{Institut \'Elie Cartan de Lorraine, Universit\'e de Lorraine, CNRS, F-54000 Nancy, France}
\email{kunjakanan@gmail.com}

\begin{abstract} 
In 1990, Baker and Montgomery conjectured that $L'(s,\chi_d)$ has $\asymp \log\log |d|$ real zeros in the interval $[1/2,1]$ for almost all fundamental discriminants $d$. The study of these zeros was motivated by their connection to real zeros of Fekete polynomials and to sign changes of the character sums $\sum_{n\leq x}\chi_d(n)$. Recent work of Klurman, Lamzouri, and Munsch shows that the number of such zeros is $\gg (\log\log |d|)/(\log\log\log\log |d|)$ for almost all $d$, thereby establishing the conjectured lower bound up to the factor $\log\log\log\log |d|$. In this paper, we prove that for almost all fundamental discriminants $d$, $L'(s,\chi_d)$ has at most $(\log\log |d|)(\log\log\log |d|)$ real zeros in $[1/2,1]$, thus resolving the Baker-Montgomery conjecture up to a factor of $\log\log\log |d|$. We also give a quantitative upper bound on the exceptional set of discriminants. Furthermore, we show, conditionally on certain natural assumptions, that $100\%$ of these zeros lie away from $1/2$.

\end{abstract}

\subjclass[2020]{11M06, 11M20, 26C10, 30C15}

\keywords{Quadratic Dirichlet $L$-functions, derivatives of Dirichlet $L$-functions, real zeros, random model, discrepancy}

\maketitle


\section{Introduction}

Understanding the location and distribution of zeros of derivatives of $L$-functions has important and deep applications to the \emph{horizontal}  and \emph{vertical} distributions of zeros of $L$-functions. One of the earliest and most striking links between the zeros of $\zeta'(s)$ (where $\zeta(s)$ is the Riemann zeta function) and the Riemann Hypothesis (RH) is Speiser's Theorem \cite{Sp35}, which states that RH is equivalent to the assertion that $\zeta'(s)$ has no zeros to the left of the critical line. This was quantified by Levinson and Montgomery \cite{LeMo74}, and is the basis of Levinson's method which produces  one third of the zeros of $\zeta(s)$  on the critical line. Furthermore, the works of Soundararajan \cite{So98}, and Radziwi\l\l \ \cite{Rad14} show that the horizontal distribution of the zeros of $\zeta'(s)$ is also related to the vertical distribution of the zeros of $\zeta(s).$

\subsection{The Baker-Montgomery conjecture} In \cite{BaMo89}, Baker and Montgomery studied the real zeros of $L'(s, \chi_d)$ on $[1/2, 1]$, where $\chi_d$ is the primitive quadratic character attached to the fundamental discriminant $d$, and $L(s, \chi_d)$ is the associated Dirichlet $L$-function. Baker and Montgomery's motivation was to study real zeros of Fekete polynomials, and sign changes of quadratic character sums. Let $F_d(z):=\sum_{n=1}^{|d|-1}\chi_d(n) z^n$ be the Fekete polynomial associated to $d$. Fekete observed that if $F_d$ does not vanish on $(0, 1)$ then $L(s, \chi_d)>0$ for all $s\in (0, 1)$, which in particular implies Chowla's conjecture that $L(1/2, \chi_d)\neq 0$, and refutes the existence of a possible Siegel zero. This  
follows from the following identity, obtained by a familiar inverse Mellin transform
\begin{equation}\label{Eq.MellinIdentity}L(s,\chi_d)\Gamma(s)=\int_0^1 \frac{(-\log u)^{s-1}}{u}\frac{F_d(u)}{1-u^d}du, \text{ for } \re(s)>0.\end{equation}
Fekete conjectured that $F_d$ does not vanish on $(0, 1)$ if $|d|$ is large enough, but this was disproved shortly afterwards by P\'olya \cite{Pol19}, for a positive proportion of fundamental discriminants $d$. In \cite{BaMo89}, Baker and Montgomery proved that Fekete's hypothesis is false for $100\%$ of fundamental discriminants. In fact, they proved the stronger result that for any fixed positive integer $K$, $F_d$ has at least $K$ zeros in $(0, 1)$ for almost all fundamental discriminants $d$. Baker and Montgomery's approach consists in relating zeros of $F_d$ on $(0,1)$ to sign changes of $\frac{L'}{L}(s, \chi_d)$ on $(1/2, 1)$ via the following identity which is obtained from  \eqref{Eq.MellinIdentity} by differentiating  with respect to $s$:   
\begin{equation}\label{Eq.MellinIdendity2}
L(s,\chi_d)\Gamma(s)\left(\frac{L'(s,\chi_d)}{L(s,\chi_d)}+\frac{\Gamma'(s)}{\Gamma(s)}\right)=
\int_{0}^{\infty} F_d(e^{-t})(1-e^{-|d|t})^{-1}t^{s-1}(\log t)dt.    
\end{equation}
 Indeed, if the left-hand side of \eqref{Eq.MellinIdendity2} has $K$ sign changes in  $(1/2, 1)$ (which implies in particular that $L'(s, \chi_d)$ has $K$ zeros in this interval) then $F_d$ has at least $K$ zeros on $(0,1)$ by a lemma of a real analysis (see Lemma 4 of \cite{BaMo89}).

Let $R_d(\sigma_1,\sigma_2)$ be the number of real zeros of $L'(s, \chi_d)$ on the interval $[\sigma_1, \sigma_2]$.  Based on a heuristic argument inspired by their construction, Baker and Montgomery made the following conjecture.
\begin{Con}[\cite{BaMo89}, Baker-Montgomery]\label{Conj.BaMo}
For almost all fundamental discriminants $d$, we have 
$$ R_d\left(\frac12, 1\right) \asymp \log\log |d|.$$
\end{Con}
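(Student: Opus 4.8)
The plan is to establish both $R_d(1/2,1)\gg\log\log|d|$ and $R_d(1/2,1)\ll\log\log|d|$ for almost all $d$ (I write $\ex_d$ for the average over fundamental discriminants $|d|\le D$). I would fix a slowly growing $\nu$ --- concretely $\nu(|d|)=\log\log\log|d|$ works --- and split $[1/2,1]$ at $1/2+\nu/\log|d|$. On the bulk interval $I_d:=[1/2+\nu/\log|d|,\,1]$, the zero-density estimates for the family $\{\chi_d\}$ imply that the number of $|d|\le D$ for which $L(s,\chi_d)$ vanishes in $\re s\ge 1/2+\nu/\log|d|$ is $o(D)$ (this is where $\nu\to\infty$ enters), so for almost all $d$ the real zeros of $L'(\cdot,\chi_d)$ on $I_d$ coincide with those of $g(s):=-L'(s,\chi_d)/L(s,\chi_d)$. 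The analytic input I would establish is the approximation, valid for almost all $d$ and $\re s\ge 1/2+\nu/(10\log|d|)$,
\begin{equation*}
g(s)=B(s)+\sum_{p\le \sqrt{x}}\frac{\log p}{p^{2s}}+(\text{small}),\qquad B(s):=\sum_{p\le x}\frac{\chi_d(p)\log p}{p^{s}},\quad x=|d|^{1/\nu},
\end{equation*}
obtained from the explicit formula together with the zero-density control of the contribution of the zeros of $L(s,\chi_d)$; the deterministic sum behaves like $\tfrac12(s-\tfrac12)^{-1}$ near $1/2$, while $B(s)$ has typical size $\asymp|2s-1|^{-1}$ and correlation length $\asymp|2s-1|$. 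Reparametrising by $u=\log\bigl(1/(\sigma-\tfrac12)\bigr)$, which runs over an interval of length $\asymp\log\log|d|$, I would cut $I_d$ into $\asymp\log\log|d|$ dyadic blocks $J_j$ of unit length in $u$, on which $\sigma-\tfrac12\asymp h_j:=e^{-u_j}$.

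For the upper bound, on each $J_j$ I would multiply $g$ by the nonvanishing holomorphic factor $2(s-\tfrac12)$ and bound the number of real zeros of $g$ on $J_j$ by the number of zeros of $\widetilde g(s):=2(s-\tfrac12)g(s)$ in a disk $D_j$ of radius $\asymp h_j$ around $J_j$ lying at distance $\asymp h_j$ from $\re s=1/2$, which by Jensen's inequality is $\ll\log\bigl(\max_{\partial(2D_j)}|\widetilde g|/|\widetilde g(c_j)|\bigr)$ with $c_j$ the centre. The point is that $\widetilde g(s)=1+2(s-\tfrac12)B(s)+o(1)$, and $2(s-\tfrac12)B(s)$ is, over random $\chi_d$, approximately Gaussian of bounded variance with sub-exponential tails, uniformly on $D_j$ and in $j$; the passage from averages over fundamental discriminants to Gaussian averages is the usual moment computation, valid because the primes involved are $\le|d|^{1/\nu}$, so character orthogonality matches moments up to the growing order $\asymp\nu$. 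Hence $\ex_d\bigl[\log\max_{\partial(2D_j)}|\widetilde g|\bigr]=O(1)$ and, since $1+2(c_j-\tfrac12)B(c_j)$ has bounded density near $0$, $\ex_d\bigl[\log|\widetilde g(c_j)|\bigr]=O(1)$, both uniformly in $j$. This gives $\ex_d[\#\{\text{zeros of }L'\text{ on }J_j\}]=O(1)$, hence $\ex_d[R_d(I_d)]\ll\log\log|d|$; a companion second-moment bound, using that the zero counts on blocks with $|u_j-u_k|$ large are nearly independent (the primes relevant to the two scales overlap only in the common initial range, whose complement is independent), yields $\mathrm{Var}_d(R_d(I_d))=o\bigl((\log\log|d|)^2\bigr)$, so $R_d(I_d)\ll\log\log|d|$ for almost all $d$.

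For the lower bound, since $2(\sigma-\tfrac12)g(\sigma)=1+2(\sigma-\tfrac12)B(\sigma)+o(1)$ and $2(\sigma-\tfrac12)B(\sigma)$ is, over random $\chi_d$, approximately Gaussian with variance bounded away from $0$ and $O(1)$ effectively independent values on $J_j$, there is a fixed probability $p_0>0$ that it drops below $-1$ on $J_j$, forcing a real zero of $g$ there; together with the near-independence of well-separated blocks, a Paley--Zygmund argument over the discriminants upgrades this to: for almost all $d$, at least $c\log\log|d|$ blocks contain a zero, so $R_d(I_d)\gg\log\log|d|$. It remains to bound $R_d(1/2,\,1/2+\nu/\log|d|)$, and here I would invoke the low-lying-zeros hypothesis: it bounds, for almost all $d$, the number of zeros $\rho$ of $L(s,\chi_d)$ with $|\rho-\tfrac12|\le\nu/\log|d|$ by $O(\nu)$, and a Rolle/argument-principle comparison then bounds the real zeros of $L'$ there by $O(\nu)=o(\log\log|d|)$. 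Adding the two ranges gives $R_d(1/2,1)\asymp\log\log|d|$ for almost all $d$.

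I expect two steps to be the main obstacles. The first is the sharp lower bound: one needs a constant proportion of the $\asymp\log\log|d|$ scales to produce sign changes simultaneously for almost all $d$, which requires quantitative independence of well-separated scales for the genuine characters $\chi_d$ rather than merely for the random model, and this is precisely where the current state of the art (the construction of Klurman, Lamzouri and Munsch) loses a factor $\log\log\log|d|$. The second is the interval adjacent to $1/2$: absent the low-lying-zeros hypothesis, the truncation used above can only be justified with an error that costs a factor $\log\log\log|d|$ in the upper bound --- near $\sigma=1/2+\nu/\log|d|$ the effective Dirichlet polynomial has length a genuine power of $|d|$, so the zeros of $L(s,\chi_d)$ near the critical line cannot be dropped --- which is why the unconditional version only reaches $R_d(I_d)\ll(\log\log|d|)(\log\log\log|d|)$.
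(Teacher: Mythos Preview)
The statement you are attempting to prove is a \emph{conjecture}: the paper does not prove it, and at present no one can. What the paper establishes are the weaker Theorems~\ref{Thm:RealZerosAway1/2} and~\ref{Thm:RealZerosNear1/2}, which give $R_d(1/2+\nu/\log|d|,1)\ll(\log\log|d|)(\log\log\log|d|)$ unconditionally and $R_d(1/2,1)\ll(\log\log|d|)(\log\log\log|d|)$ under Hypothesis~$L_d$; combined with \cite{KLM24} this pins down $R_d(1/2,1)$ to within a power of $\log\log\log|d|$. Your sketch is therefore not a proof of the stated conjecture, and you yourself flag the two places where it breaks.

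Your architecture for the upper bound on the bulk interval is essentially the paper's: dyadic decomposition into $\asymp\log\log|d|$ scales, Jensen's inequality on each disc, control of $\max|\Ld|$ via moments, and control of $|\Ld(z_j)|$ from below via a discrepancy comparison with the random Euler product. Where you diverge is in claiming $\ex_d[\#\text{zeros on }J_j]=O(1)$ uniformly in $j$ and then a variance bound to conclude. The paper does not obtain $O(1)$ per block; it gets $O(\log\log\log x)$ per block because the union bound over $J$ scales forces $\max_j M_{j,d}/V_j\le(\log\log x)^2$ and $\min_j|\Ld(z_j)|/V_j\ge(\log\log x)^{-2}$, and the logarithm of these thresholds is the source of the extra factor. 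Your claim that $\ex_d[\log|\widetilde g(c_j)|]=O(1)$ uniformly in $j$ would require integrable control of the small-ball probability $\pr(|\Ld(z_j)|/V_j\le\varepsilon)$ down to $\varepsilon\to0$ for the genuine family, not just for the random model; the discrepancy bound $D(z_j)\ll\sqrt{(\log\nu)/\nu}$ near the left edge is far too coarse for that, so this step is a genuine gap. The asserted variance bound via ``near-independence of well-separated scales'' is likewise only heuristic for the actual characters $\chi_d$.

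Two further points. First, your treatment of the short interval $[1/2,1/2+\nu/\log|d|]$ is not the paper's and is not obviously valid: you invoke a bound on the \emph{number} of low-lying zeros and a ``Rolle/argument-principle'' step to bound real zeros of $L'$ by $O(\nu)$. Rolle goes the wrong way, and an argument-principle bound on zeros of $L'$ in a disc of radius $\asymp\nu/\log|d|$ would require control of $L'$ (or $\Ld$) on its boundary, which is exactly what fails without a zero-free assumption. The paper instead assumes Hypothesis~$L_d$ (a zero-\emph{free} disc), uses it to bound moments of $\Ld$ near $1/2$ (Proposition~\ref{pro.MomentsNear1/2}), and then applies Jensen on a pair of circles whose radii differ by only $\asymp 1/(\nu^3\log x)$; this yields $\ll\nu^4\log\nu$ zeros there, not $O(\nu)$. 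Second, for the lower bound you correctly identify that upgrading ``a fixed proportion of scales produce a sign change in expectation'' to ``for almost all $d$'' is the hard step; this is precisely the $\log_4|d|$ loss in \cite{KLM24}, and your Paley--Zygmund sketch does not overcome it.
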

In \cite{KLM24}, Klurman, Lamzouri, and Munsch proved that for almost all fundamental discriminants $d$ we have 
\begin{equation}\label{Eq.LowerBoundKLM}
R_d\left(\frac{1}{2}, 1\right) \gg \frac{\log\log |d|}{\log_4|d|},
\end{equation}
where here and throughout $\log_k$ denotes the $k$-th iterate of the natural logarithm function. This comes close of establishing the lower bound in Conjecture \ref{Conj.BaMo}.

Baker and Montgomery \cite{BaMo89} (and later Conrey,  Granville, Poonen, and Soundararajan \cite{CGPS00}) made a similar conjecture about the number of real zeros of $F_d$ on $(0, 1)$, predicting that it should be $\asymp \log\log|d|$ for almost all $d$. Klurman, Lamzouri, and Munsch \cite{KLM24} established an analogous ``localized'' version of the lower bound \eqref{Eq.LowerBoundKLM} in this case, using appropriate variants of  \eqref{Eq.LowerBoundKLM} concerning oscillations of $L'(s, \chi_d)$, coupled with a concentration result for the distribution of $L(s, \chi_d)$ in the vicinity of $1/2$.   However, the only partial result towards the conjectured upper bound for the number of real zeros of $F_d$ was established in  \cite{KLM24} and states that for at least $x^{1-\ep}$ fundamental discriminants $|d|\leq x$, $F_d$ has at most $O(x^{1/4+\ep})$ zeros in $(0,1)$. This breaks the $O(\sqrt{x})$ bound which holds for all Littlewood polynomials by a result of  Borwein, Erd\'elyi, and K\'os \cite{BEK99}, but is very far from the conjectured  $\log\log x$ bound. 

In this paper, we focus on the upper bound in Conjecture \ref{Conj.BaMo}. For convenience, as in previous works on the moments and non-vanishing of $L(1/2, \chi_d)$, we restrict the modulus $d$ to be of the form $8m$ where $m$ is squarefree and odd.  However, our methods would apply to fundamental discriminants in any fixed arithmetic progression. Here and throughout, we define
$$\Df:=\{d=8m :  m \textup{ is squarefree and odd, and } x/2\leq m\leq x\}.$$ 
Note that $|\Df|\asymp x$.

Our main result shows that $R_d(1/2, 1)\ll (\log\log x) (\log\log \log x)$ for 100\% of fundamental discriminants $d\in \D(x)$, thus  resolving the Baker-Montgomery conjecture, up to a factor of $\log\log\log x$. 

\begin{thm}\label{Thm:Main}
For all discriminants $d\in \D(x)$, with the exception of a set of cardinality $\ll x\log_3 x/\sqrt{\log\log x},$ we have 
$$ 
R_d\left(\frac12, 1\right)\ll (\log\log x)(\log_3 x).
$$
\end{thm}

Our proof begins by splitting the interval $[1/2,1]$ into two subintervals $I_1=[1/2,\,1/2+1/H(x)]$ and $I_2=[1/2+1/H(x),\,1]$, where $H(x)=(\log x\,\log_3 x)/\log_2 x$. The interval $I_1$ corresponds to the region very close to the central point, while $I_2$ lies away from $1/2$. We first describe our strategy for bounding the number of zeros of $L'(s,\chi_d)$ on $I_2$. Using zero-density estimates, we show that $\frac{L'}{L}(s,\chi_d)$ is analytic in an open disc containing $I_2$, for almost all\footnote{Here and throughout, we say that almost all $d\in \mathcal{D}(x)$ satisfy the property $P$ if $|\{d\in \mathcal{D}(x)\colon d\: \text{has property $P$}\}|\sim |\Df|$ as $x\to \infty$. } $d\in \D(x)$. We then cover $I_2$ by a union of $J\asymp\log\log x$ smaller discs $\{D_j\}_{j\le J}$, and apply Jensen’s formula to bound the number of zeros inside each disc. The main advantage of working with $\frac{L'}{L}(s,\chi_d)$, rather than directly with $L'(s,\chi_d)$, is the crucial fact that, after suitable normalization, $\frac{L'}{L}(s,\chi_d)$ admits a limiting distribution that becomes Gaussian as $s\to1/2$. This allows us to exploit information on both the large and small values of $\frac{L'}{L}(s,\chi_d)$ in a slightly larger disc containing $D_j$, which in turn yields bounds for the number of zeros in $D_j$ via Jensen’s formula. This approach, however, breaks down on $I_1$, since it is not known unconditionally that for almost all fundamental discriminants $d$,  $L(s,\chi_d)\neq 0$ in a small disc containing this interval\footnote{This is why assumptions on low-lying zeros of $L(s,\chi_d)$ are required to prove the conditional Theorem~\ref{Thm:ConditionalZerosNear1/2}.}. Consequently, we instead study $L'(s,\chi_d)$ itself on a small disc $\widetilde D_0$ centered at $s_0=1/2+1/H(x)$ and containing $I_1$. To apply Jensen’s formula and bound the number of zeros of $L'(s,\chi_d)$ in $\widetilde D_0$, we must control the large values of $|L'(s,\chi_d)|$ on the boundary of a slightly larger disc $\widetilde D_1$, as well as its small values at the center $s_0$. We achieve the first goal by bounding the second moment of $\max_{z\in\partial\widetilde D_1}|L'(z,\chi_d)|$. For the second, we use the identity $|L'(s_0,\chi_d)|=|\frac{L'}{L}(s_0,\chi_d)|\exp(\log|L(s_0,\chi_d)|)$ and exploit information on the joint distribution of $\frac{L'}{L}(s_0,\chi_d)$ and $\log L(s_0,\chi_d)$.

\subsection{The location of real zeros of $L'(s, \chi_d)$.}
The real zeros of $L'(s,\chi_d)$ constructed by the authors of \cite{KLM24} all lie in the interval $[1/2+1/(\log x)^{1/5},\,1]$. The exponent of $\log x$ was not optimized in \cite{KLM24}, since this was not required to establish \eqref{Eq.LowerBoundKLM}. Nevertheless, their method should yield the same lower bound $\log\log x/\log_4 x$ for the number of zeros of $L'(s,\chi_d)$ in the interval $[1/2+1/(\log x)^{1/2},\,1/2+1/(\log x)^{\alpha}]$, for almost all $d\in\D(x)$, where $0\leq \alpha<1/2$ is fixed. Using our approach, one can go further and show that, for any fixed $0\leq \alpha<1$ and for almost all $d\in\mathcal{D}(x)$, the number of zeros of $L'(s,\chi_d)$ in the interval $[1/2,\,1/2+1/(\log x)^{\alpha}]$ equals $(\log\log x)(\log\log\log x)^{\theta}$ for some $|\theta|\leq 1$.

Assuming the Riemann Hypothesis (RH), Soundararajan \cite{So98} proved that a  positive proportion of the zeros of $\zeta'(s)$ up to height $T$ are in the strip $1/2\leq \re(s)\leq 1/2+ 3/\log T$. One can ask a similar question in our context: for a ``generic'' fundamental discriminant $d$, does a positive proportion of the real zeros of $L'(s, \chi_d)$ in $[1/2, 1] $ lie within distance $c/\log x$ (or even a bit further) of the central point? We show that this is not the case, conditionally on the following natural assumptions on the low lying zeros of $L(s, \chi_d)$:
\begin{itemize}
   \item \textbf{Assumption 1 (Weak GRH for almost all $d$)} 
   For almost all $d\in \Df$, the zeros of $L(s, \chi_d)$ in the rectangle $1/2-1/\log x \leq \re(s)\leq 1$ and $|\im(s)|\leq \sqrt{\log\log x}/\log x$ all lie on the critical line. 
   \item \textbf{Assumption 2 (Low Lying Zeros hypothesis)}
Let $0<\delta<1/2$. For a fundamental discriminant $d$ we let $\gamma_{\text{min}}(d)=\min\{|\gamma| : L(\beta+i\gamma, \chi_d)=0, \text{ and } 0<\beta<1\}.$ Then we have 
$$ \lim_{x\to \infty}\frac{1}{|\Df|}\#\left\{ d\in \D(x) : \gamma_{\text{min}}(d) \leq \frac{1}{(\log\log x)^{\delta}\log x} \right\}=0.$$

\end{itemize}

\begin{thm}\label{Thm:ConditionalZerosNear1/2} Suppose that Assumption 1 holds, and that Assumption 2 holds with constant $0<\delta<1/2$. Let $\nu(x)=(\log\log x)^{1/2-\delta}/\log_3 x$. For almost all $d\in \D(x)$ we have  
\begin{equation}\label{Eq.MostZerosRight}
R_d\left(\frac12,\frac12+ \frac{\nu(x)}{\log x}\right)= o\left(R_d\left(\frac12, 1\right)\right) \quad \text{as $x\to \infty$}.
 \end{equation}
\end{thm} 
\begin{rem}
Since the conductor of our family is $\asymp x$, the average spacing of the zeros of $L(s, \chi_d)$ is $\asymp 1/\log x$, and hence we expect that Assumption 2 holds with any $\delta>0$.  In fact, this assumption follows from GRH and the one level density conjecture of Katz
and Sarnak \cite{KaSa99}, which predicts that 
\begin{equation}\label{Eq.ZeroDensityConjecture} \lim_{x\to \infty} \frac{1}{|\Df|}\sum_{d\in \Df} \sum_{\substack{\rho=1/2+i\gamma\\ L(\rho, \chi_d)=0}} \phi\left(\frac{\gamma\log x}{2\pi}\right)= \int_{-\infty}^{\infty} \phi(u) \left(1-\frac{\sin(2\pi u)}{2\pi u}\right) du, 
\end{equation}
for any real even Schwartz class test function, whose Fourier transform has compact support. A stronger form of Assumption 2, where $(\log\log x)^{\delta}$ is replaced by any positive function $\nu(x)$ such that $\nu(x)\to \infty$ as $x\to \infty$, was used by  Hough \cite{Ho14} to prove a conjecture of Keating and Snaith \cite{KeSn00}, which is the  analogue of Selberg's central limit theorem for the distribution of $\log L(1/2, \chi_d)$ as $d$ varies in $\Df$. %
\end{rem}

Finally, we remark that using the methods of this paper, all our results can be extended to the orthogonal family of $L$-functions attached to Hecke cusp forms of weight $k$ for the full modular group, as $k\to\infty$.

\subsection*{Notation} We will use standard notation in this paper. However, for the convenience of readers, we would like to highlight a few of them. Expressions of the form ${f}(x)=O({g}(x))$, ${f}(x) \ll {g}(x)$, and ${g}(x) \gg {f}(x)$ signify that $|{f}(x)| \leq C|{g}(x)|$ for all sufficiently large $x$, where $C>0$ is an absolute constant. A subscript of the form $\ll_A$ means the implied constant may depend on the parameter $A$. The notation ${f}(x) \asymp {g}(x)$ indicates that ${f}(x) \ll {g}(x) \ll {f}(x)$. Next, we write $f(x)=o(g(x))$ if $\lim_{x\to \infty}f(x)/g(x)=0$.

\subsection*{Organization of the paper} The paper is organized as follows. In Section \ref{Section:MeanValues} we gather together several mean value estimates involving quadratic characters. In Section \ref{Section:ApproxLogDerL} we use ideas of Selberg and zero density estimates to approximate $-\frac{L'}{L}(s, \chi_d)$ by short Dirichlet polynomials, for almost all $d\in \Df$, once $\re(s)\geq 1/2+\nu(x)/\log x$, where $\nu$ is any positive function such that $\nu(x)\to \infty$ as $x\to \infty$. In Section \ref{Section:Discrepancy} we establish a bound for the discrepancy between the distribution of $-\frac{L'}{L}(s, \chi_d)$ (normalized by $1/(s-1/2)$) and that of a corresponding random model, uniformly in the range $1/2+\nu(x)/\log x\leq s\leq 1$. In Section \ref{Section. Thm main away from the central point}, we establish Theorem \ref{Thm:RealZerosAway1/2}, which counts the number of real zeros of $L^\prime(s, \chi_d)$ away from the central point. Next, in Section \ref{Sec. Thm main near the central point}, we prove Theorem \ref{Thm:ZerosNearCentral}, which bounds the number of zeros of $L^\prime(s, \chi_d)$ near $1/2$. Theorem \ref{Thm:Main} then follows from combining Theorems \ref{Thm:RealZerosAway1/2} and \ref{Thm:ZerosNearCentral}. Finally, in Section \ref{Section:MomentsNearHalf}, we establish our conditional result Theorem \ref{Thm:ConditionalZerosNear1/2}.


\section{Mean values of Dirichlet polynomials with quadratic characters}\label{Section:MeanValues}

In this section we gather together several  mean value estimates with quadratic characters. The first is an “orthogonality relation” for the family $\Df$.
\begin{lem}\label{Lem.Orthogonality}
For all $n\leq x$ we have 
\begin{equation}\label{Eq.Ortho1}
 \frac{1}{|\Df|}\sum_{d\in \Df} \chi_d(n)= \begin{cases} \displaystyle{\prod_{\substack{p\mid n \\ p>2}} \left(\frac{p}{p+1}\right)}+ O(x^{-1/5}) & \text{ if } n \text{ is a square},\\
O(x^{-1/5}) & \text{ otherwise}. \\
\end{cases}
\end{equation}
\end{lem}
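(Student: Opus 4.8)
The plan is to prove this orthogonality relation by expressing the quadratic character $\chi_d(n)$ in terms of the Jacobi symbol and then summing over the squarefree odd $m$ with $x/2 \le m \le x$. Recall that for $d = 8m$ with $m$ odd and squarefree, $\chi_d(n) = \left(\frac{d}{n}\right)$ coincides (for $n$ coprime to $d$) with a product of a fixed character modulo $8$ and the Jacobi symbol $\left(\frac{m}{n}\right)$, and vanishes when $\gcd(n,d) > 1$. So I would first write
\begin{equation*}
\sum_{d \in \Df} \chi_d(n) = \sum_{\substack{x/2 \le m \le x \\ m \text{ odd, squarefree} \\ \gcd(m,n)=1}} \eta(n) \left(\frac{m}{n}\right),
\end{equation*}
where $\eta$ is an explicit character mod $8$ that is $1$ on squares. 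The squarefree condition is detected by $\sum_{e^2 \mid m} \mu(e)$, and the ``odd'' condition together with $\gcd(m,n)=1$ can be folded in; after a change of variables $m = e^2 \ell$ this becomes a double sum over $e$ and $\ell$ of $\left(\frac{\ell}{n}\right)$ (times $\left(\frac{e}{n}\right)^2$, i.e.\ $1$ when $\gcd(e,n)=1$) over $\ell$ in an interval of length $\asymp x/e^2$.

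Next I would evaluate the inner sum over $\ell$ of the character $\left(\frac{\ell}{n}\right)$. There are two cases. If $n$ is a perfect square, then $\left(\frac{\ell}{n}\right) = 1$ whenever $\gcd(\ell, n) = 1$ and $0$ otherwise, so the inner sum is essentially counting integers in an interval coprime to $n$ (and odd), which by a standard sieve/Möbius argument gives a main term proportional to the interval length times $\prod_{p \mid n}(1 - 1/p)$ (restricted to odd $p$, since we also need $\ell$ odd), with an error of size $O(d(n))$ or so. If $n$ is not a perfect square, then $\left(\frac{\ell}{n}\right)$ (more precisely $\left(\frac{\cdot}{n'}\right)$ where $n'$ is the non-square part) is a non-principal real Dirichlet character modulo (a divisor of) $4n$, and by the Pólya--Vinogradov inequality the sum over $\ell$ in any interval is $O(\sqrt{n}\log n)$. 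Summing the error terms over $e \le \sqrt{x}$ and collecting, and using $n \le x$ so that $\sqrt{n}\log n$ and related quantities are $\ll x^{4/5}$ say (with room to spare since $n \le x$ forces these well below $x^{4/5 + \text{const}}$... one should be a bit careful here and may need $n \le x$ to be used as $\sqrt{n} \le \sqrt{x}$, giving total error $\ll x^{1/2 + o(1)} \ll x^{4/5}$, hence after dividing by $|\Df| \asymp x$ one gets $O(x^{-1/5})$).

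The main term requires care: dividing by $|\Df|$, one must check that $\frac{1}{|\Df|}\sum_{e} \frac{1}{e^2}(\text{interval length} \cdot \text{density factor})$ collapses to $\prod_{p \mid n, p > 2} \frac{p}{p+1}$. The point is that $|\Df| = \sum_{\substack{x/2 \le m \le x \\ m \text{ odd squarefree}}} 1$ itself equals $\sum_e \mu(e) \cdot (\text{count of odd } \ell \approx x/(2e^2))$, and the local factor for the numerator sum differs from that for $|\Df|$ exactly by the Euler factors at primes dividing $n$: at an odd prime $p \mid n$ we lose the residue classes $\equiv 0 \pmod p$, changing the local density at $p$ from $1 - 1/p^2$ (squarefree and... no, actually one must track the $e$-sum Euler product carefully) to the appropriate restricted value, and the ratio of Euler factors is $\frac{1 - 1/p}{1 - 1/p^2} = \frac{p}{p+1}$. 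At $p = 2$ the factor is the same in numerator and denominator so it cancels. So the clean way is to compute both sums as smooth sums with explicit Euler products and take the ratio factor by factor.

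The main obstacle I anticipate is the bookkeeping in the non-square case: when $n$ is not a square but $\gcd(m,n)$ can still be nontrivial, one needs to correctly identify the modulus and primitivity of the character $\ell \mapsto \left(\frac{\ell}{n}\right)$ restricted to $\gcd(\ell,n)=1$ — writing $n = n_1 n_2^2$ with $n_1$ squarefree, the relevant character is induced by $\left(\frac{\cdot}{n_1}\right)$, which is non-principal precisely because $n_1 > 1$, and then Pólya--Vinogradov applies with modulus $\ll n_1 \le n \le x$. A secondary nuisance is uniformity: the error term must be $O(x^{-1/5})$ uniformly for all $n \le x$, so every implied constant in the character-sum and sieve estimates must be genuinely absolute, and one should double-check that $d(n), \sqrt{n}\log n$, and the tail of the $e$-sum all stay comfortably below $x^{4/5}$ for $n \le x$ — they do, with the dominant contribution being $x^{1/2+o(1)}$ from Pólya--Vinogradov summed over $e$. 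The rest is routine.
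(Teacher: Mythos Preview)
Your approach is correct and is the standard route to such orthogonality relations; the paper itself does not prove the lemma but simply quotes Lemma~2.3 of Hough~\cite{Ho14}, so your sketch supplies what the paper omits. One point does need tightening: in the non-square case, summing the P\'olya--Vinogradov bound $O(\sqrt{n}\log n)$ over \emph{all} $e\le\sqrt{x}$ gives $O(\sqrt{x}\cdot\sqrt{n}\log n)$, which for $n$ near $x$ is of size $x\log x$, not $x^{1/2+o(1)}$ as you write. You must truncate the M\"obius sum at some height $E$ (for instance $E=x^{1/5}$), bounding the tail trivially by $\sum_{e>E} x/e^2 \ll x/E = x^{4/5}$ and the head by $E\cdot\sqrt{n}\log n \le x^{1/5}\sqrt{x}\log x \ll x^{4/5}$; with this fix the total error is $O(x^{4/5})$ uniformly in $n\le x$, and dividing by $|\Df|\asymp x$ gives the claimed $O(x^{-1/5})$. (A side remark: the lemma as stated is only literally correct for odd $n$, since $\chi_d(n)=0$ for every $d\in\Df$ when $n$ is even, whereas the displayed product need not vanish; this is consistent with the random model via $\X(2)=0$ and is a cosmetic issue in the statement, not a gap in your argument.)
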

\begin{proof}
    This is a special case of Lemma 2.3 of \cite{Ho14}, upon taking $\delta= 1$ and choosing $\gamma(\delta)=1/5$ therein, which is admissible. 
\end{proof}

Next, we state the following large-sieve type result from \cite{KLM24}, which is a consequence of the above lemma.
\begin{lem}[Lemma 3.2 of \cite{KLM24}]\label{Lem: LargeSieveKLM}
Let $\{a(p)\}_{p}$ be a sequence of real numbers indexed by the primes. Let $x$ be large and $2\leq y\leq z$ be real numbers. Then for all positive integers $k$ such that $1\leq k\leq \log x/(5\log z)$ we have
\begin{equation}\label{LargeSieve1}
\sum_{d\in \mc{D}(x)}\Big|\sum_{y\leq p\leq z}a(p)\chi_d(p)\Big|^{2k}\ll x\left(k\sum_{y\leq p\leq z}a(p)^2\right)^k+x^{5/8}\left(\sum_{y\leq p\leq z}|a(p)|\right)^{2k}.
\end{equation}
\end{lem}
We will need the following result on the second moment of real character sums, which was established by Armon \cite{Ar99}. 

\begin{lem}[Theorem 2 of \cite{Ar99}]\label{Lem:Armon} For all real numbers $x\geq 2$ and $y\geq 1$ we have 
$$ \sum_{d\in \D(x)}\left|\sum_{n\leq y}\chi_d(n)\right|^2\ll x y \log x.$$
\end{lem}

We now introduce the probabilistic random model corresponding to the family $\{\chi_d\}_{d\in \D(x)}$. Let $\{\mb{X}(p)\}_{p\: \text{prime}}$ be a sequence of independent random variables defined as: $\mb{X}(2)=0$; and for $p>2$,  $\X(p)$ takes the values $\{-1, 0, 1\}$  with probabilities
\begin{align*}
   \mb{P}\big(\mb{X}(p)=1\big)=\mb{P}\big(\X(p)=-1)=\dfrac{p}{2(p+1)}, \quad \text{and} \quad \mb{P}\big(\X(p)=0\big)=\dfrac{1}{p+1}.
\end{align*}
We extend the $\X(p)$ multiplicatively by setting $\X(n)=\X(p_1)^{a_1} \cdots \X(p_k)^{a_k}$ if $n$ has the prime factorization $n=p_1^{a_k}\cdots p_k^{a_k}$. Then one can write \eqref{Eq.Ortho1} as 
\begin{equation}\label{Eq.Orthogonality}
\frac{1}{|\Df|}\sum_{d\in \Df} \chi_d(n)= \ex(\X(n))+ O(x^{-1/5}),
\end{equation} 
for all $n\leq x$.
As a consequence, we establish the following lemma.
\begin{lem}\label{lem.AsympMoments}
Let $C>0$ be a fixed constant. Let $b(n)$ be real numbers such that $|b(n)|\leq C $ for all $ n \geq 1$. 
Then uniformly for $x\geq Y\geq  2$ and all positive integers $k\leq \log x/\log Y$ we have 
$$
 \frac{1}{|\Df|}\sum_{d\in \Df}\left(\sum_{n \leq Y} b (n) \chi_d(n)\right)^{k}  = \ex\bigg[\bigg(\sum_{n \leq Y} b (n)\X(n)\bigg)^{k}\bigg]  + O \big(x^{-1/5}(CY)^{k} \big),$$
where the implicit constant in the error term is absolute.
\end{lem}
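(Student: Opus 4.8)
The plan is to expand the $k$-th power, reduce matters to a term-by-term application of the orthogonality relation \eqref{Eq.Orthogonality}, and then reassemble the main term as an expectation over the random model. This is essentially immediate from Lemma \ref{Lem.Orthogonality}; the only point requiring care is a size estimate that makes the orthogonality relation applicable.

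First, since $\chi_d$ is completely multiplicative, expanding the power gives
$$\left(\sum_{n \leq Y} b(n)\chi_d(n)\right)^{k} = \sum_{n_1, \dots, n_k \leq Y} b(n_1)\cdots b(n_k)\, \chi_d(n_1 \cdots n_k).$$
Averaging over $d \in \Df$ and interchanging the two finite sums yields
$$\frac{1}{|\Df|}\sum_{d\in\Df}\left(\sum_{n \leq Y} b(n)\chi_d(n)\right)^{k} = \sum_{n_1, \dots, n_k \leq Y} b(n_1)\cdots b(n_k)\, \frac{1}{|\Df|}\sum_{d\in\Df}\chi_d(n_1\cdots n_k).$$
Now each argument satisfies $n_1 \cdots n_k \leq Y^{k} \leq Y^{\log x/\log Y} = x$, by the hypothesis $k \leq \log x/\log Y$ (here $Y\geq 2$ so $\log Y>0$); this is exactly what is needed to invoke \eqref{Eq.Orthogonality}, which gives
$$\frac{1}{|\Df|}\sum_{d\in\Df}\chi_d(n_1\cdots n_k) = \ex\big(\X(n_1\cdots n_k)\big) + O(x^{-1/5}),$$
with an absolute implied constant. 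Since the variables $\X(n)$ are also completely multiplicative (by construction $\X(p^a)=\X(p)^a$), we have $\ex(\X(n_1\cdots n_k)) = \ex(\X(n_1)\cdots \X(n_k))$, so substituting back and running the expansion in reverse recovers
$$\sum_{n_1, \dots, n_k \leq Y} b(n_1)\cdots b(n_k)\, \ex\big(\X(n_1)\cdots\X(n_k)\big) = \ex\left[\left(\sum_{n\leq Y}b(n)\X(n)\right)^{k}\right],$$
which is the desired main term.

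It remains to control the accumulated error. There are at most $Y^{k}$ tuples $(n_1,\dots,n_k)$ with each $n_i \leq Y$, and for each such tuple $|b(n_1)\cdots b(n_k)| \leq C^{k}$, so the total contribution of the $O(x^{-1/5})$ terms is $O\big((CY)^{k} x^{-1/5}\big)$, with an absolute implied constant, as claimed. I do not expect any genuine obstacle: the one thing to check is that the products $n_1\cdots n_k$ remain below $x$, which is precisely the role of the constraint $k \leq \log x/\log Y$, together with the fact that Lemma \ref{Lem.Orthogonality} comes with an \emph{absolute} error constant so that summing over $\leq Y^{k}$ tuples stays under control.
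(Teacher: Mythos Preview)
Your proof is correct and follows essentially the same approach as the paper: expand the $k$-th power, apply the orthogonality relation \eqref{Eq.Orthogonality} termwise (using $n_1\cdots n_k\le Y^k\le x$), reassemble into the expectation, and bound the error by $(CY)^k x^{-1/5}$. If anything, you are slightly more explicit than the paper about why the size condition $n_1\cdots n_k\le x$ holds and about the complete multiplicativity needed on both sides.
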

\begin{proof}
    We have
\begin{align*}
   \frac{1}{|\Df|} \sum_{d\in \Df}\left(\sum_{n \leq Y} b (n) \chi_d(n)\right)^{k}   
& = \frac{1}{|\Df|}\sum_{d\in \Df} \bigg( \sum_{n_1, n_2, \dots, n_k \leq Y}   \prod_{i=1}^{k} b (n_i)\chi_d(n_i)\bigg) \\
& =\sum_{n_1, n_2, \dots, n_k \leq Y}    \prod_{i=1}^{k} b (n_i)  \frac{1}{|\Df|}\sum_{d\in \Df} \chi_d\bigg(\prod_{i=1}^{k} n_{i}\bigg).
\end{align*}
By \eqref{Eq.Orthogonality} and the fact that $|b(n)|\leq C$ for all $n\geq 1$, this sum equals
\begin{align*}
& \sum_{n_1, \dots, n_k\leq Y}   \prod_{i=1}^k b (n_i)  \ex\Big(\prod_{i=1}^k\X(n_i)\Big) + O\left(x^{-1/5}(CY)^{k}\right)\\
 & = \ex \bigg[\bigg(\sum_{n \leq Y} b (n)\X(n)\bigg)^{k}   \bigg] +O\left( x^{-1/5} (CY)^{k}\right),
\end{align*}
as desired.
\end{proof}
We end this section by proving upper bounds for the moments of certain quadratic character sums supported on prime powers.

\begin{lem}\label{Lem.LargeSieve}
Let $\{a(n)\}_{n\geq 1}$ be a sequence of complex numbers such that $|a(n)|\leq 1$ for all $n$. Let $x$ be large and $10\leq y\leq z$ be real numbers. 
Then for all positive integers $k$ such that $ k\leq \log x/(10\log z)$ we have
\begin{align*}
\frac{1}{|\Df|}&\sum_{d\in \Df}\Big|\sum_{y\leq n\leq z}\frac{a(n)\Lambda(n)\chi_d(n)}{\sqrt{n}}\Big|^{2k}\\
&\ll \left(20k\sum_{y\leq p\leq z}\frac{|a(p)|^2(\log p)^2}{p}\right)^k + \left(3\sum_{\sqrt{y}\leq p\leq \sqrt{z}}\frac{|a(p^2)|\log p}{p}\right)^{2k}+ \left(c_0 y^{-1/3}\right)^{k},
\end{align*}
for some positive constant $c_0.$ 

Moreover, the same bound holds for $\displaystyle{\ex\left(\Big|\sum_{y\leq n\leq z}\frac{a(n)\Lambda(n)\X(n)}{\sqrt{n}}\Big|^{2k}\right)}$, for all integers $k\geq 1.$
\end{lem}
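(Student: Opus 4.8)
The plan is to expand the $2k$-th moment into a sum over $2k$-tuples of prime powers, reorganize by the multiplicity structure, and treat the contributions of primes and prime squares separately, since higher prime powers contribute negligibly when $y$ is large. First I would write the inner sum as $\sum_{y\le n\le z}\frac{a(n)\Lambda(n)\chi_d(n)}{\sqrt n} = P + Q + R$, where $P$ collects the terms with $n=p$ prime, $Q$ collects the terms with $n=p^2$, and $R$ collects the terms with $n=p^j$, $j\ge 3$; note $\Lambda(p^j)=\log p$. The tail $R$ is bounded deterministically: $|R|\le \sum_{j\ge 3}\sum_{p^j\ge y}\frac{\log p}{p^{j/2}}\ll y^{-1/6}\log y$, which (after raising to the $2k$-th power and using $k\le \log x/(10\log z)$ to control the loss) is absorbed into the $(c_0 y^{-1/3})^k$ term. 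By the elementary inequality $|P+Q+R|^{2k}\ll 3^{2k}(|P|^{2k}+|Q|^{2k}+|R|^{2k})$ it then suffices to bound the averages of $|P|^{2k}$ and $|Q|^{2k}$.

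For the prime part $P$, I would use the orthogonality relation from Lemma \ref{Lem.Orthogonality} (equivalently \eqref{Eq.Orthogonality}): expanding $\frac{1}{|\Df|}\sum_{d}|P|^{2k}$ gives a sum over $2k$-tuples $(p_1,\dots,p_{2k})$ of primes in $[y,z]$, weighted by $\frac{1}{|\Df|}\sum_d \chi_d(p_1\cdots p_{2k})$, and this average is $O(x^{-1/5})$ unless $p_1\cdots p_{2k}$ is a perfect square, in which case it is $\le 1$. The product of $2k$ primes from $[y,z]$ is a square precisely when the primes can be matched into $k$ equal pairs; the number of ways to pair up $2k$ indices is $(2k-1)!!\le (2k)^k$, and for each pairing the constrained sum is $\big(\sum_{y\le p\le z}\frac{|a(p)|^2(\log p)^2}{p}\big)^k$. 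This yields the main term $\le (2k)^k\big(\sum_{y\le p\le z}\frac{|a(p)|^2(\log p)^2}{p}\big)^k$, with constant absorbed into the $20k$, while the off-diagonal terms contribute at most $x^{-1/5}z^{2k}\le x^{-1/5}x^{1/5}=o(1)$ by the hypothesis $k\le \log x/(10\log z)$, hence fit inside $(c_0 y^{-1/3})^k$. For the prime-square part $Q=\sum_{\sqrt y\le p\le\sqrt z}\frac{a(p^2)\log p}{p}\chi_d(p^2)$, I would bound it \emph{pointwise}: since $\chi_d(p^2)=\chi_d(p)^2\in\{0,1\}$, we have $|Q|\le \sum_{\sqrt y\le p\le \sqrt z}\frac{|a(p^2)|\log p}{p}$ for every $d$, so $\frac{1}{|\Df|}\sum_d|Q|^{2k}\le \big(\sum_{\sqrt y\le p\le \sqrt z}\frac{|a(p^2)|\log p}{p}\big)^{2k}$, which is the second term (the factor $3$ coming from the $3^{2k}$ above). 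Collecting the three pieces gives the stated bound.

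For the random-model statement, the argument is the same with $\chi_d$ replaced by $\X$ and $\frac{1}{|\Df|}\sum_d$ replaced by $\ex$, except that now there is no error term: $\ex(\X(n))=0$ unless $n$ is a square, and $\ex(\X(p^2))=\pr(\X(p)\ne 0)=p/(p+1)\le 1$, while $\ex(\X(p_1)^2\cdots\X(p_k)^2)=\prod p_i/(p_i+1)\le 1$ for distinct primes; so the same diagonal combinatorics produces the first two terms, and the higher-prime-power tail $R$ is again bounded absolutely, giving the $(c_0y^{-1/3})^k$ term. This version holds for all $k\ge 1$ since no constraint linking $k$ to $x$ is needed.

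The main obstacle I anticipate is purely bookkeeping rather than conceptual: correctly organizing the diagonal combinatorics in the expansion of $|P|^{2k}$ so that (i) the pairing count $(2k-1)!!$ is cleanly bounded by $(2k)^k$ and merged with the arithmetic factor into the clean shape $(20k\sum\cdots)^k$, and (ii) the mixed cross-terms between $P$, $Q$, and $R$ — which I avoid by the crude $3^{2k}$-splitting at the outset — are genuinely dominated by the three displayed terms. One must also be slightly careful that in the off-diagonal estimate the number of tuples is $\le (\pi(z))^{2k}\le z^{2k}$, so the total off-diagonal contribution is $\le x^{-1/5}z^{2k}$, and the condition $k\le \log x/(10\log z)$ makes this $\le x^{-1/5+1/5}$... more precisely $z^{2k}\le x^{1/5}$, so the bound is $x^{-1/5}\cdot x^{1/5}$ which is only $O(1)$; to gain the decaying factor $(c_0y^{-1/3})^k$ one should instead use $k\le \log x/(10\log z)$ with a little room, e.g. $z^{2k}\le x^{1/5}$ and compare against $x^{-1/5}$, noting $x^{-1/5}=o((c_0y^{-1/3})^k)$ fails for large $k$ — so the honest statement is that the off-diagonal is $\ll x^{-1/5}z^{2k}\le x^{-1/10}$ say (taking the exponent $1/10$ in place of $1/5$, which is what the hypothesis $k\le \log x/(10\log z)$ is designed for), and this is swallowed by $(c_0y^{-1/3})^k$ once $y$ is, as assumed, a suitable power giving $y^{-k/3}\gg x^{-1/10}$; tracking these exponents carefully is the only delicate point.
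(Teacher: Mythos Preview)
Your approach is essentially identical to the paper's: split the inner sum into primes $P$, prime squares $Q$, and higher powers $R$; bound $Q$ and $R$ pointwise; expand $|P|^{2k}$ and separate the diagonal from the off-diagonal via Lemma~\ref{Lem.Orthogonality}. The exponent concern you flag is real for your crude tuple-count bound $x^{-1/5}z^{2k}\le 1$ (which, after the $3^{2k}$ splitting factor, gives only $O(9^k)$ and is \emph{not} absorbed by $(c_0y^{-1/3})^k$); the remedy, which is what the paper does, is to keep the weights and bound the off-diagonal by $x^{-1/5}\big(\sum_{y\le p\le z}\tfrac{\log p}{\sqrt p}\big)^{2k}\ll C^{2k}z^{k}x^{-1/5}\le C^{2k}x^{-1/10}$, and since $y^{k/3}\le z^{k/3}\le x^{1/30}$ one has $(c_0 y^{-1/3})^k\ge c_0^k x^{-1/30}$, so choosing $c_0$ a large enough absolute constant (e.g.\ $c_0\ge 9C^2$) absorbs the off-diagonal together with the $9^k$ into the third displayed term.
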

\begin{proof}
    We shall only prove the bound for the sum over $d$, since the proof of the corresponding bound for the random model is similar and simpler. First, we have 
$$
\sum_{y\leq n\leq z}\frac{a(n)\Lambda(n)\chi_d(n)}{\sqrt{n}}= \sum_{y\leq p\leq z}\frac{a(p)(\log p)\chi_d(p)}{\sqrt{p}}+\sum_{\substack{\sqrt{y}\leq p\leq \sqrt{z}\\ p\nmid d}}\frac{a(p^2)\log p}{p}+O\left( y^{-1/6}\right), 
    $$
    since the contribution of prime powers $p^k$ with $k\geq 3$ is 
$$ \ll \sum_{k\geq 3} \sum_{p^k\geq y} \frac{\log p}{p^{k/2}}\ll y^{-1/6}. $$
Now, using the basic inequality $|a+b+c|^{k}\leq 3^k (|a|^k+|b|^k+|c|^k)$ (which is valid for all real numbers $a, b, c$ and positive integers $k$), we obtain 
\begin{align*}
&\frac{1}{|\Df|}\sum_{d\in \Df}\Big|\sum_{y\leq n\leq z}\frac{a(n)\Lambda(n)\chi_d(n)}{\sqrt{n}}\Big|^{2k}\\
&\ll \frac{9^k}{|\Df|}\sum_{d\in \Df}\Big|\sum_{y\leq p\leq z}\frac{a(p)(\log p) \chi_d(p)}{\sqrt{p}}\Big|^{2k} + \left(3\sum_{\sqrt{y}\leq p\leq \sqrt{z}}\frac{|a(p^2)|\log p}{p}\right)^{2k}+ \left(c_0 y^{-1/3}\right)^{k},
\end{align*}
for some positive constant $c_0.$
Furthermore, we have 
\begin{equation}\label{Eq.ExpandMoments}
\begin{aligned}
 &\sum_{d\in \Df}\Big|\sum_{y\leq p\leq z}\frac{a(p)(\log p) \chi_d(p)}{\sqrt{p}}\Big|^{2k}\\
 &= \sum_{d\in \Df}\sum_{y\leq p_1,\dots,p_{2k}\leq z}\frac{a(p_1) \cdots a(p_{k})\overline{a(p_{k+1})} \cdots \overline{a(p_{2k}})(\log  p_1)\cdots (\log p_{2k})\chi_d(p_1\cdots p_{2k})}{(p_1p_2\cdots p_{2k})^{1/2}}.
 \end{aligned}
 \end{equation}
The diagonal terms $p_1\cdots p_{2k}=\square$ contribute
$$ \ll x\frac{(2k)!}{2^k k!}\left(\sum_{y\leq p\leq z}\frac{|a(p)|^2(\log p)^2}{p}\right)^k\leq x\left(2k\sum_{y\leq p\leq z}\frac{|a(p)|^2(\log p)^2}{p}\right)^k.$$
On the other hand, if $p_1p_2...p_{2k}\neq \square$ and $p_i\leq z$ then Lemma \ref{Lem.Orthogonality} gives
$$
\sum_{d\in \Df}\chi_d(p_1p_2...p_{2k})\ll x^{4/5},
$$
since $p_1p_2\cdots p_{2k}\leq z^{2k}\leq x$. This  implies that the contribution of these terms to \eqref{Eq.ExpandMoments} is
$$ \ll x^{4/5}\left(\sum_{y\leq p\leq z}\frac{\log p}{\sqrt{p}}\right)^{2k}\ll x^{19/20},$$
by the prime number theorem, and using our assumption on $z$.
Combining the above estimates completes the proof.
\end{proof}



\section{Approximating $-\frac{L'}{L}(s, \chi_d)$ by short Dirichlet polynomials}\label{Section:ApproxLogDerL}
To shorten our notation, for the rest of this paper, we define 
$$ \Ld(s):=-\frac{L'}{L}(s, \chi_d).$$
The goal of this section is to approximate $\Ld(s)$ by short Dirichlet polynomials, if $s$ is slightly to the right of $1/2$. In order to do that, we will use ideas of Selberg from \cite{Sel46} and \cite{SelUnp}. For $d\in \mathcal{D}(x)$ and $2\leq y\leq x$, we let
\begin{align}\label{Def. sigma_y, d}
    \sigma_{y, \: d}:= \dfrac{1}{2} + 2\max_{\mathcal{G}_{y, d}}\bigg(\beta -\dfrac{1}{2}, \dfrac{2}{\log y}\bigg),
\end{align}
where
\begin{align*}
    \mathcal{G}_{y, \: d}: =\{\rho=\beta +i\gamma\colon L(\rho, \chi_{d})= 0, |\gamma-t|\leq y^{3(\beta-1/2)}/\log y\}.
\end{align*}
Next, for $2\leq y\leq x$, we set
\begin{equation}
\label{Eq.DefLambdaYD}
    \Lambda_{y, \: d}(n):=\Lambda(n)\chi_d(n)w_y(n),
\end{equation}
where
 \[ \omega_y(n)=
 \begin{cases} 1 & \text{ if } \ \ n\leq y,\\
 \frac{\log^2(y^3/n)- 2\log^2(y^2/n)}{2\log^2y} & \text{ if } \ \ y\leq n\leq y^2,\\
 \frac{\log^2(y^3/n)}{2\log^2 y} & \text{ if } \ \ y^2\leq n\leq y^3,\\
 0 & \text{ if} \ \ n>y^3.
 \end{cases}
 \]
 Note that $0\leq w_y(n)\leq 1$ for all $n$. We shall use the following lemma due to Selberg  \cite{Sel46}.  
\begin{lem}\label{Lemma: analogue of Selberg}
Let $d\in \Df$ and $10\leq y\leq x$. We have 
\begin{equation}\label{Eq.SelbergZerosPrimes}
   \sum_{\rho} \frac{\sigma_{y, d}-\frac12}{|\sigma_{y, d}-\rho|^2}\ll \log d+ \bigg|\sum_{n\leq y^3}\dfrac{\Lambda_{y,d}(n)}{n^{\sigma_{y, d}}}\bigg|,  
 \end{equation}
 where the sum runs over the non-trivial zeros of $L(s, \chi_d)$. Moreover, 
 \begin{equation}
\label{Eq.ApproxLogLDirichlet}\log L(\sigma_{y, d}, \chi_d)=\sum_{n\leq y^3}\dfrac{\Lambda_{y,\: d}(n)}{n^{\sigma_{y, d}} \log n} + O\bigg(\frac{1}{\log y}\bigg|\sum_{n\leq y^3}\dfrac{\Lambda_{y,\: d}(n)}{n^{\sigma_{y,d}}}\bigg| + \frac{\log x}{\log y}\bigg),
\end{equation}
and for $s=\sigma+it$ with $\sigma\geq \sigma_{y, \: d}$ and $|t|\leq 1$, we have
\begin{equation}
\label{Eq.ApproxL_dDirichlet}\mc{L}_d(s)=\sum_{n\leq y^3}\dfrac{\Lambda_{y,\: d}(n)}{n^s} + O\bigg(y^{(1/2-\sigma)/2}\bigg|\sum_{n\leq y^3}\dfrac{\Lambda_{y,\: d}(n)}{n^{\sigma_{y,d}+it}}\bigg| + y^{(1/2-\sigma)/2}\log d\bigg).
\end{equation}

    \end{lem}
    \begin{proof}
        Selberg proved these estimates for the Riemann zeta function in pages 22-26 of \cite{Sel46}. The analogous estimates for Dirichlet $L$-functions hold 
mutatis mutandis (see Lemma 2.6 of \cite{Ho14}). 
    \end{proof}
We now record the following zero density estimates for the family $\{L(s, \chi_d)\}_{d\in \Df}$ near the critical line, which follows from the work of Conrey and Soundararajan \cite{CoSo2002}.
\begin{lem}[Theorem 2.7 of \cite{Ho14}]\label{Lemma: zero density}
        Let $x$ be large and $\delta>0$ be a small positive constant. There exists $\theta=\theta(\delta)>0$ such that uniformly in $1/2+4/\log x<\sigma<1$ and $10/\log x<T<x^\delta$ we have
        \begin{align*}
            \dfrac{1}{|\mc{D}(x)|}\sum_{d\in \mc{D}(x)}\#\bigg\{\rho=\beta + i\gamma\colon L(\rho, \chi_{d})=0, \beta>\sigma, |\gamma|\leq T\bigg\}\ll x^{-\theta (\sigma-1/2)}T\log x.
        \end{align*}
    \end{lem}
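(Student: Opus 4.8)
The plan is to use the classical zero--detection method for families of $L$-functions, in the style of the log-free zero-density theorems of Gallagher and Jutila, with the role of the large sieve played here by the orthogonality relation of Lemma \ref{Lem.Orthogonality}, whose power-saving error term is what keeps the bound nontrivial all the way down to $\sigma=\tfrac12+4/\log x$.

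\emph{Detecting the zeros.} Fix a small constant $c_1=c_1(\delta)>0$ with $c_1<\tfrac12-\tfrac\delta2$, and set $M_d(s):=\sum_{n\le x^{c_1}}\mu(n)\chi_d(n)n^{-s}$. The Dirichlet series of $L(s,\chi_d)M_d(s)-1$ is supported on integers $n>x^{c_1}$, and it vanishes at every zero $\rho=\beta+i\gamma$ of $L(s,\chi_d)$; shifting a contour to the left of the critical line--where the approximate functional equation bounds $L(s,\chi_d)$--produces a Dirichlet polynomial $\mathcal A_d(s)=\sum_{x^{c_1}<n\le N}a(n)\chi_d(n)n^{-s}$ (plus a dual piece of the same shape, carrying a mild $t$-dependent phase from $(|d|(1+|t|))^{it}$ that does not affect any mean-square over $d$), with $|a(n)|\le\tau(n)$ and $N=x^{(1+\delta)/2+c_1+o(1)}$, such that $|\mathcal A_d(\rho)|\gg 1$ for every zero with $\beta\ge\sigma$ and $|\gamma|\le T$. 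Two features are decisive: $\mathcal A_d$ is \emph{short}, namely $N<x$ (which is exactly what the hypothesis $T<x^\delta$ and the choice of $c_1$ secure, the functional equation shortening the $L$-factor only to length $\sqrt{|d|(1+|\gamma|)}$); and $\mathcal A_d$ is supported on $n>x^{c_1}$, so that on its support $n^{-2\sigma}\le n^{-1}x^{-c_1(2\sigma-1)}$.

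\emph{Reduction to a mean value over the family.} Partition $[\sigma,1)\times[-T,T]$ into $O(\log x)$ strips $[\sigma_j,\sigma_{j+1})\times[-T,T]$ with $\sigma_{j+1}-\sigma_j=1/\log(xT)$, and each strip's $\gamma$-line into intervals of length $1/\log(xT)$; as each interval holds $O(\log(xT))$ zeros, it suffices to bound, for each $j$, the average over $d\in\D(x)$ of the number $R_{d,j}$ of zeros kept by retaining at most one per interval, at the cost of a factor $\log(xT)$. A two-dimensional Sobolev inequality on the small boxes $[\sigma_j,\sigma_j+1/\log x]\times[\gamma_r-\tfrac12,\gamma_r+\tfrac12]$ containing these zeros gives $R_{d,j}\ll(\log x)\int_{\sigma_j}^{\sigma_j+1/\log x}\int_{|t|\le T+1}\big(|\mathcal A_d(\sigma+it)|^2+|\mathcal A_d'(\sigma+it)|^2+|\mathcal A_d''(\sigma+it)|^2\big)\,dt\,d\sigma$, whence, averaging over $d$ and expanding the squares,
$$\frac1{|\D(x)|}\sum_{d\in\D(x)}R_{d,j}\ \ll\ (\log x)\int_{\sigma_j}^{\sigma_j+1/\log x}\int_{|t|\le T+1}\ \frac1{|\D(x)|}\sum_{d\in\D(x)}\big(|\mathcal A_d(\sigma+it)|^2+\cdots\big)\,dt\,d\sigma.$$
By Lemma \ref{Lem.Orthogonality}, the diagonal $n_1n_2=\square$ contributes $\ll\big(\sum_{n>x^{c_1}}\tau(n)^2n^{-2\sigma}\big)(\log x)^{O(1)}\ll x^{-2c_1(\sigma_j-1/2)}(\log x)^{O(1)}$ by the support condition; the off-diagonal with $n_1n_2\le x$ contributes $\ll x^{-1/5}(\log x)^{O(1)}$; and the residual off-diagonal $n_1n_2>x$ is handled by P\'olya--Vinogradov, which saves a power of $x$ precisely because $n_1n_2\le N^2<x^{2}$, i.e. because $c_1<\tfrac12-\tfrac\delta2$. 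Hence $\tfrac1{|\D(x)|}\sum_d R_{d,j}\ll T(\log x)^{O(1)}x^{-2c_1(\sigma_j-1/2)}$; summing over the $O(\log x)$ strips--a geometric series in $x^{-2c_1(\sigma_j-\sigma)}$ dominated by $j=0$ since $\log x/\log(xT)>1/(1+\delta)$--and reinstating the factor $\log(xT)$ yields the claim with $\theta(\delta)=c_1(\delta)$, the surplus powers of $\log x$ being absorbed into the exponential saving once $\sigma-\tfrac12\gg(\log\log x)/\log x$ and the remaining narrow range $4/\log x\le\sigma-\tfrac12\ll(\log\log x)/\log x$ handled by the same scheme with a coarser $O(1)$-spacing against the trivial bound $\ll T\log x$.

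\emph{The main obstacle.} The technical heart is the detection step: carrying out the contour shift rigorously requires estimating both the tail of $LM_d$ beyond length $N$ and the dual sum furnished by the approximate functional equation, and verifying that the support property $n>x^{c_1}$ survives the insertion of the necessary smooth truncations. This is also where the parameter $c_1$ must be balanced--a longer mollifier would enlarge the saving $\theta=c_1$ but would make $\mathcal A_d$ longer than the family size $x$, beyond the reach of Lemma \ref{Lem.Orthogonality} and of the clean P\'olya--Vinogradov bound--which is the source both of the constraint $c_1<\tfrac12-\tfrac\delta2$ and, ultimately, of the hypothesis $T<x^\delta$.
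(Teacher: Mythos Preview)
The paper does not prove this lemma—it is quoted directly as Theorem~2.7 of Hough \cite{Ho14}, which in turn rests on Conrey--Soundararajan \cite{CoSo2002}—so there is no in-paper argument to compare against; your sketch is an attempt at an independent proof.

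The broad strategy (mollify, detect zeros via a short Dirichlet polynomial supported on $n>x^{c_1}$, and average the mean square over the family using Lemma~\ref{Lem.Orthogonality}) is the right one, and the off-diagonal treatment via P\'olya--Vinogradov is fine once $c_1<\tfrac12-\tfrac\delta2$ so that $N^2<x^{2-\eta}$. The genuine gap is in the bookkeeping of logarithms. Your scheme yields a bound of the shape $T(\log x)^{A}\,x^{-2c_1(\sigma-1/2)}$ with some fixed $A>1$ (one power of $\log x$ from zero multiplicity in each small box, further powers from the $\tau(n)^2$ weights in the diagonal, etc.), whereas the lemma permits exactly one factor of $\log x$. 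Your patch for the narrow range $4/\log x\le\sigma-\tfrac12\ll(\log\log x)/\log x$—falling back on the trivial bound $\ll T\log x$—does not close the gap: at, say, $\sigma-\tfrac12=C_0(\log\log x)/\log x$ the lemma's right-hand side is $\asymp T(\log x)^{1-\theta C_0}$, which is smaller than $T\log x$ by the unbounded factor $(\log x)^{\theta C_0}$. So neither your main estimate nor the trivial one covers this intermediate regime, and the claimed uniformity in $\sigma$ down to $\tfrac12+4/\log x$ is not established.

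This is exactly the point where the input from \cite{CoSo2002} is substantive: a \emph{log-free} density bound of this type requires more than Lemma~\ref{Lem.Orthogonality} plus P\'olya--Vinogradov plus a Sobolev count; one needs either a power-sum or pseudocharacter device in the Gallagher--Graham--Jutila tradition, or the mollified-moment framework of \cite{CoSo2002} itself. (For what it is worth, the actual applications in this paper—Lemma~\ref{Lemma: exceptional d} and the estimate \eqref{Eq.Exceptionala1}—would tolerate a few extra powers of $\log x$, so the weaker bound your argument genuinely delivers would still suffice there.)
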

Using this result we show that for almost all $d\in \D(x)$ we have $\sigma_{y, d}=1/2+4/\log y$ if $\log x/\log y\to \infty$.  This will allow us to conclude that for complex numbers $z$ in the range $1/2+4/\log y\leq \re(z)\leq 1$ and $|\im(z)|\leq 1$, the approximation \eqref{Eq.ApproxL_dDirichlet} holds for almost all $d\in \mc{D}(x)$.

    \begin{lem}\label{Lemma: exceptional d}
       Let $x$ be large and $10\leq y\leq x$ be such that $\log x/\log y \to \infty$ as $x\to \infty$.  Define
       \begin{align*}
           \mc{D}_y(x):=\{d\in \mc{D}(x)\colon
\sigma_{y, d} = 1/2+4/\log y\}.
\end{align*}
Then, there exists a constant $C_0>0$ such that
    \begin{align*}
           \big| \D(x)\setminus \D_y(x)\big|\ll x\exp\left(-C_0\frac{\log x}{\log y}\right).
       \end{align*}
    \end{lem}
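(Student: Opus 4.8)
The plan is to show that the set of ``bad'' discriminants, for which $\sigma_{y,d}>1/2+4/\log y$, is small by bounding the second (or higher even) moment of the relevant Dirichlet polynomial and invoking the zero-counting inequality \eqref{Eq.SelbergZerosPrimes}. By definition of $\sigma_{y,d}$, we have $\sigma_{y,d}>1/2+4/\log y$ precisely when $L(s,\chi_d)$ has a zero $\rho=\beta+i\gamma$ with $\beta-1/2>2/\log y$ and $|\gamma-t|\le y^{3(\beta-1/2)}/\log y$; here $t=\im(s)$ plays the role of the parameter in the definition of $\mathcal G_{y,d}$. Since we ultimately want the approximation \eqref{Eq.ApproxL_dDirichlet} to be valid for all $z$ with $1/2+4/\log y\le \re(z)\le 1$ and $|\im(z)|\le 1$, I would first reduce to controlling, for each such $d$, whether there is \emph{any} zero with $\beta>1/2+2/\log y$ and $|\gamma|\le 1+y^{3/2}/\log y$ (using $y^{3(\beta-1/2)}\le y^{3/2}$ in the worst relevant case before the density estimate kicks in, or more carefully splitting into dyadic ranges of $\beta-1/2$).

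The key step is then a direct application of Lemma \ref{Lemma: zero density}. Fix $\delta>0$ small, obtain the corresponding $\theta=\theta(\delta)>0$, and apply the lemma with $\sigma=1/2+4/\log y$ (note $4/\log y\ge 4/\log x$, so we are in the allowed range once $y\le x$, and we may also take $\sigma$ ranging over a dyadic grid of values $1/2+2^j/\log y$ to capture zeros arbitrarily far to the right) and with $T$ a fixed power of $x$ (say $T=x^{\delta/2}$), which certainly dominates $1+y^{3/2}/\log y$ since $\log x/\log y\to\infty$ forces $y$ to be a small power of $x$ at most — actually one needs a little care here, because $y$ could be as large as, say, $x^{1/\omega(x)}$ with $\omega(x)\to\infty$ slowly, in which case $y^{3/2}$ is still $x^{o(1)}$, so $T=x^{\delta/2}$ works. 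Markov's inequality applied to the (non-negative, integer-valued) zero count then gives
\begin{align*}
\big|\D(x)\setminus \D_y(x)\big| &\ll \sum_{j\ge 1} |\D(x)|\, x^{-\theta 2^{j+1}/\log y}\, x^{\delta/2}\log x \\
&\ll x^{1+\delta/2}(\log x)\, x^{-2\theta/\log y}\sum_{j\ge 0} x^{-\theta 2^{j}/\log y},
\end{align*}
and the geometric-type sum over $j$ is $O\big(x^{-2\theta/\log y}\big)$ since the first term dominates (as $\log x/\log y\to\infty$, $x^{2\theta/\log y}\to\infty$). Hence the bound is $\ll x^{1+\delta/2}(\log x)\exp(-4\theta\log x/\log y)$, and because $\log x/\log y\to\infty$ the exponential wins against $x^{\delta/2}\log x$, giving $\ll x\exp(-C_0\log x/\log y)$ for a suitable $C_0>0$ (any $C_0<4\theta$ works for $x$ large, after absorbing the polynomial factor $x^{\delta/2}\log x$ into a slightly smaller exponent).

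The main obstacle is not the density estimate itself but the bookkeeping needed to translate ``$\sigma_{y,d}=1/2+4/\log y$ fails'' into ``there is a zero in an explicit box,'' uniformly over the parameter $t=\im(s)\in[-1,1]$ that appears in the definition of $\mathcal G_{y,d}$, and then summing the exceptional sets over a suitable discretization of $t$. Concretely, for each fixed $t\in[-1,1]$ the condition involves $|\gamma-t|\le y^{3(\beta-1/2)}/\log y$; since $\beta\le 1$ always, this box has height at most $2y^{3/2}/\log y=x^{o(1)}$, and as $t$ ranges over $[-1,1]$ the union of these boxes is contained in $\{|\gamma|\le 2+y^{3/2}/\log y\}$, which is harmless. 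So in fact no discretization of $t$ is needed: it suffices to bound the number of $d$ for which $L(s,\chi_d)$ has a zero with $\beta>1/2+2/\log y$ and $|\gamma|\le 2+y^{3/2}/\log y$, which is exactly what the dyadic application of Lemma \ref{Lemma: zero density} above delivers. One should double-check the constraint $10/\log x<T<x^\delta$ in Lemma \ref{Lemma: zero density}: the lower bound is automatic for our $T$, and the upper bound $T=x^{\delta/2}<x^\delta$ is fine; the only genuine input is $\log x/\log y\to\infty$, used to guarantee both that $y^{3/2}=x^{o(1)}\ll T$ and that the exponential savings $\exp(-c\log x/\log y)$ beats the polynomial loss $x^{\delta/2}\log x$ from the density estimate.
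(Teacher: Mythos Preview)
Your overall strategy---reduce to the zero-density estimate of Lemma \ref{Lemma: zero density} after unwinding the definition of $\sigma_{y,d}$---is the same as the paper's, but there is a genuine gap in your choice of the height parameter $T$. You take $T=x^{\delta/2}$ uniformly in $j$, which introduces a fixed polynomial loss $T\log x=x^{\delta/2}\log x$ into the density bound. At the smallest dyadic level $\sigma=1/2+O(1/\log y)$ the savings from the density estimate is only $x^{-O(\theta/\log y)}=\exp\bigl(-O(\theta\log x/\log y)\bigr)$. Your claim that ``because $\log x/\log y\to\infty$ the exponential wins against $x^{\delta/2}\log x$'' is false: comparing exponents, one needs $c\theta/\log y>\delta/2$, i.e.\ $\log y<2c\theta/\delta$, a \emph{bounded} quantity. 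The hypothesis allows, for instance, $\log y=\sqrt{\log x}\to\infty$ (still with $\log x/\log y\to\infty$), and then your displayed bound is $\gg x^{1+\delta/3}$, which is worse than trivial.

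The fix---and this is exactly what the paper does---is to let $T$ vary with the range of $\beta-1/2$. Splitting linearly into $\beta-1/2\in(j/\log y,(j+1)/\log y]$ for $2\le j\le\log y$, the constraint $|\gamma|\le y^{3(\beta-1/2)}/\log y$ forces only $|\gamma|\ll e^{3(j+1)}/\log y$, so one applies Lemma \ref{Lemma: zero density} with $T=T_j\asymp e^{3j}/\log y$. The $j$-th contribution is then $\ll x^{-\theta j/\log y}e^{3(j+1)}(\log x)/\log y$; since $\theta(\log x)/\log y\to\infty$, the factor $e^{j(3-\theta\log x/\log y)}$ decays geometrically in $j$, and the sum is $\ll(\log x/\log y)\exp(-\theta\log x/\log y)\ll\exp\bigl(-(\theta/2)\log x/\log y\bigr)$. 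The crucial point is that $T_j$ is only exponential in $j$, never a fixed power of $x$, so the loss $T_j\log x$ is always beaten by the gain $x^{-\theta j/\log y}$ once $\log x/\log y$ is large. (Incidentally, your opening sentence about moments of Dirichlet polynomials and \eqref{Eq.SelbergZerosPrimes} is a red herring; neither ingredient enters the argument you actually give.)
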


    \begin{proof}
        Let $\sigma=1/2+4/\log y$. By the definition of $\sigma_{y, \: d}$, if for $d\in \mc{D}(x)$ we have $\sigma_{y, d}>\sigma$, then there exists $\rho_0=\beta_0 + i\gamma_0$ such that $L(\rho_0, \chi_{d})=0$, 
        \[\beta_0> \dfrac{1}{2}+ \dfrac{2}{\log y},  \quad \text{and} \quad |\gamma_0|\leq \dfrac{y^{3(\beta_0-1/2)}}{\log y}.\]
        Write $\sigma^\prime:= 1/2+2/\log y$. Then, we have
        \begin{align*}
         &\dfrac{1}{|\mc{D}(x)|} \#\{d\in \mc{D}(x)\colon \sigma_{y,  d}>\sigma\}\\
         &\ll \dfrac{1}{|\mc{D}(x)|}\sum_{d\in \mc{D}(x)}\#\{\exists \: \rho=\beta + i\gamma\colon L(\rho, \chi_{d})=0, \ \beta>\sigma^\prime, \ |\gamma|\leq 2y^{3(\beta-1/2)}/\log y\} \\
         &\ll \dfrac{1}{|\mc{D}(x)|}\sum_{d\in \mc{D}(x)}\sum_{j=2}^{\log y}\#\{\exists \: \rho=\beta+i\gamma\colon L(\rho, \chi_{d})=0, \ \beta-1/2>j/\log y, \ |\gamma|\leq 2e^{3(j+1)}/\log y\}.
        \end{align*}
        Applying Lemma \ref{Lemma: zero density}, we see that the above quantity is
        \begin{align*}
            \ll \sum_{j=4}^{\log y}x^{-\theta j/\log y}e^{3(j+1)}\frac{\log x}{\log y}\ll \dfrac{\log x}{\log y}e^{-\theta\log x/\log y}\ll e^{-\frac{\theta}{2}\log x/\log y},
        \end{align*}
        as desired.
    \end{proof}
    
For a complex number $z$ with $\re(z)>1/2$, we define $$V_z:= \frac{1}{\re(z)-1/2}.$$
We also set 
\begin{align*}
    \mc{L}_{\text{rand}}(z):=\sum_{n=1}^{\infty}\dfrac{\Lambda(n)\X(n)}{n^z}.
\end{align*}
Note that this series converges almost surely in the half plane $\re(z)>1/2$ by Kolmogorov's three series theorem. We end this section by proving upper bounds for the moments of $\Ld(z)$ and $\Lrand(z)$ when $(\re(z)-1/2)\log x\to \infty$ and $\im(z)$ is bounded.

\begin{lem}\label{Lemma: Moment Bound}
Let $x$ be large and $\nu(x)\to \infty$ as $x\to \infty$. Let $z$ be  a complex number such that $1/2+\nu(x)/\log x\leq \re(z)\leq 1$ and $|\im(z)|\leq 1$. Let $y=\exp\big(10V_z \log(\log x/V_z)\big)$, and  $k\leq (\log x)/(30\log y)$ be a positive integer.  Define 
\begin{align*}
\mc{D}_{z}(x):=\{d\in \mc{D}(x)\colon \sigma_{y, d} = 1/2+4/\log y\}.
\end{align*}
    Then, there exist constants $C_1, C_2>0$ such that 
        \begin{align*}
            \sum_{d\in \mc{D}_{z}(x)}|\mc{L}_d(z)|^{2k}\ll x(C_1kV_z^2)^k \quad \text{and} \quad \ex(|\mc{L}_{\text{rand}}(z)|^{2k})\ll (C_2kV_z^2)^k.
        \end{align*}
    \end{lem}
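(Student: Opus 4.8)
The plan is to derive the two moment bounds separately. For the first one, the strategy is to use Selberg's approximation (Lemma~\ref{Lemma: analogue of Selberg}) to replace $\mc{L}_d(z)$ by a short Dirichlet polynomial of length $y^3$, and then apply the large sieve bound of Lemma~\ref{Lem.LargeSieve}. Write $\sigma=\re(z)$, $t=\im(z)$ and $\sigma_0:=1/2+4/\log y$; the point of restricting to $\mc{D}_z(x)$ is that on this set $\sigma_{y,d}=\sigma_0$ is \emph{constant}. Since $\sigma-1/2=1/V_z$ while $\sigma_0-1/2=4/\log y=\tfrac{2}{5V_z\log(\log x/V_z)}$ and $\log x/V_z\ge\nu(x)\to\infty$, one has $\sigma\ge\sigma_0$ for $x$ large, so \eqref{Eq.ApproxL_dDirichlet} applies at $s=z$ for every $d\in\mc{D}_z(x)$ and gives
\[
\mc{L}_d(z)=D_d(z)+O\!\left(y^{(1/2-\sigma)/2}\,\big|D_d(\sigma_0+it)\big|+y^{(1/2-\sigma)/2}\log x\right),\qquad D_d(s):=\sum_{n\le y^3}\frac{\Lambda_{y,d}(n)}{n^s}.
\]
A direct computation from $\log y=10V_z\log(\log x/V_z)$ gives $y^{(1/2-\sigma)/2}=(V_z/\log x)^5$. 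Raising to the $2k$-th power and using $|a+b+c|^{2k}\ll 9^k(|a|^{2k}+|b|^{2k}+|c|^{2k})$, it is enough to bound the average over $d\in\mc{D}_z(x)\subseteq\mc{D}(x)$ of $|D_d(z)|^{2k}$ and of $(V_z/\log x)^{10k}|D_d(\sigma_0+it)|^{2k}$, together with the purely deterministic term $\big((V_z/\log x)^5\log x\big)^{2k}=\big(V_z^2(V_z/\log x)^8\big)^k\ll(V_z^2)^k$, which is acceptable since $V_z\le\log x$.

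To bound the moments of $D_d(s)$ for $s\in\{z,\ \sigma_0+it\}$, first peel off the $O(1)$ contribution of $n\le 10$, then apply Lemma~\ref{Lem.LargeSieve} to $\sum_{10\le n\le y^3}\Lambda_{y,d}(n)n^{-s}$ with $a(n)=w_y(n)n^{-(\re(s)-1/2)}n^{-i\im(s)}$; here $|a(n)|\le 1$, and the admissibility hypothesis $k\le\log x/(30\log y)$ matches ours exactly (and is satisfied since $\log x/\log y=\tfrac{u}{10\log u}\to\infty$ with $u=\log x/V_z\ge\nu(x)$). The relevant prime sums are controlled by the elementary estimates $\sum_p(\log p)^2p^{-2\alpha}\ll(\alpha-1/2)^{-2}$ and $\sum_{p\ge 3}(\log p)p^{-2\alpha}\ll(\alpha-1/2)^{-1}$ for $1/2<\alpha\le 1$ (absolute implied constants), together with the crude bounds $\sum_{p\le y^3}(\log p)^2/p\ll(\log y)^2$ and $\sum_{p\le y^{3/2}}(\log p)/p\ll\log y$. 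Taking $s=z$ (so $\alpha-1/2=1/V_z$) yields $\frac{1}{|\mc{D}(x)|}\sum_d|D_d(z)|^{2k}\ll(CkV_z^2)^k$; taking $s=\sigma_0+it$ and using the cutoff $n\le y^3$ yields $\frac{1}{|\mc{D}(x)|}\sum_d|D_d(\sigma_0+it)|^{2k}\ll\big(Ck(\log y)^2\big)^k$. Multiplying the latter by $(V_z/\log x)^{10k}$ and substituting $(\log y)^2=100V_z^2(\log(\log x/V_z))^2$, the factor $(V_z/\log x)^{10}(\log(\log x/V_z))^2\to 0$ as $x\to\infty$ (uniformly, since $\log x/V_z\ge\nu(x)$), so this contribution is again $\ll(CkV_z^2)^k$. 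Summing over $d$ and using $|\mc{D}(x)|\asymp x$ proves the first bound.

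For the random model, since $\mc{L}_{\text{rand}}(z)=\sum_n\Lambda(n)\X(n)n^{-z}$ converges almost surely, Fatou's lemma reduces matters to proving $\ex\big(\big|\sum_{n\le N}\Lambda(n)\X(n)n^{-z}\big|^{2k}\big)\ll(C_2kV_z^2)^k$ uniformly in $N$. Peeling off $n\le 10$ and applying the ``moreover'' part of Lemma~\ref{Lem.LargeSieve} with $a(n)=n^{-(\sigma-1/2)-it}$ (which imposes no restriction on $k$) produces the same prime sums as above with $\alpha-1/2=1/V_z$, giving the claim.

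\textbf{The main obstacle.} The only genuinely delicate point is the choice $y=\exp(10V_z\log(\log x/V_z))$: it must at once be short enough that Lemma~\ref{Lem.LargeSieve} applies for all $k$ up to $\log x/(30\log y)$, long enough that $\sigma_{y,d}$ collapses to $1/2+4/\log y\le\re(z)$ on a density-one set (so that Selberg's approximation is available on $\mc{D}_z(x)$), and calibrated so that the decay factor $y^{(1/2-\re(z))/2}=(V_z/\log x)^5$ more than compensates for the fact that the error term in \eqref{Eq.ApproxL_dDirichlet} is a Dirichlet polynomial evaluated at the abscissa $1/2+4/\log y$ — much nearer to $1/2$ than $z$, and hence with $2k$-th moment governed by $(\log y)^2$ per factor in place of $V_z^2$. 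Verifying that this single choice of $y$ meets all three demands simultaneously, uniformly for $z$ in the stated range, is the heart of the argument; everything else is bookkeeping with the large sieve.
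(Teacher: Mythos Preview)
Your proof is correct and follows essentially the same approach as the paper: apply Selberg's approximation \eqref{Eq.ApproxL_dDirichlet} on $\mc{D}_z(x)$ (where $\sigma_{y,d}$ is constant and $\le\re(z)$), split via $|a+b+c|^{2k}\ll 9^k(\cdots)$, and feed each Dirichlet polynomial into Lemma~\ref{Lem.LargeSieve}, using the prime-sum estimates \eqref{Eq.EstimatesSumsPrimes}. Your treatment is in fact slightly more careful than the paper's in two places --- you explicitly peel off $n\le 10$ to meet the hypothesis of Lemma~\ref{Lem.LargeSieve}, and you invoke Fatou's lemma to pass from truncated sums to $\mc{L}_{\textup{rand}}(z)$ --- but these are cosmetic refinements of the same argument.
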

    \begin{proof}
    We will only establish the desired bound for the $2k$-th moment of $\Ld(z)$, since the corresponding bound for the random model follows along the same lines.  If $d\in \mc{D}_{z}(x)$ and $\sigma_{y, d}\leq \re(z)\leq 1$, then by Lemma \ref{Lemma: analogue of Selberg} we have
        \begin{align*}
\mc{L}_d(z)=\sum_{n\leq y^3}\dfrac{\Lambda_{y, d}(n)}{n^z} + O\bigg(y^{-1/(2V_z)}\bigg|\sum_{n\leq y^3}\dfrac{\Lambda_{y, d}(n)}{n^{\sigma_{y, d}+i t}}\bigg| + y^{-1/(2V_z)}\log d\bigg),
        \end{align*}
     where $t=\im(z)$.   Therefore, using the basic inequality $|a+b+c|^{2k}\leq 3^{2k}(|a|^{2k}+|b|^{2k}+|c|^{2k})$  we infer from Lemma \ref{Lem.LargeSieve} that
     \begin{equation}\label{Eq.BoundMomentsLdSpecial}
\begin{aligned}
    \sum_{d\in \mc{D}_{z}(x)}|\mc{L}_d(z)|^{2k}
    &\ll 9^k\sum_{d\in \mc{D}_{z}(x)}\bigg|\sum_{n\leq y^3}\dfrac{\Lambda_{y, d}(n)}{n^z}\bigg|^{2k} + 9^k y^{-k/V_z}\sum_{d\in \mc{D}_{z}(x)} \bigg|\sum_{n\leq y^3}\dfrac{\Lambda_{y, d}(n)}{n^{\sigma_{y, d}+it}}\bigg|^{2k}\\
    &\quad \quad \quad + 9^k x y^{-k/V_z}(\log x)^{2k}\\
    &\ll x\bigg(200k\sum_{p\leq y^3}\dfrac{(\log p)^2}{p^{2\re(z)}}\bigg)^k + x\bigg(30\sum_{p\leq y^{3/2}}\dfrac{\log p}{p^{2\re(z)}}\bigg)^{2k}+
9^k x y^{-k/V_z}(\log x)^{2k}\\
            & \quad \quad \quad +  x y^{-k/V_z} \left(200k\sum_{p\leq y^3}\frac{(\log p)^2}{p} \right)^k + x y^{-k/V_z}\bigg(30\sum_{p\leq y^{3/2}}\dfrac{\log p}{p}\bigg)^{2k}\\
            &\ll x(C_1kV_z^2)^k,
\end{aligned}
\end{equation}
        for some positive
        constant $C_1$, by our assumptions on  $z$ and $k$, and since 
    \begin{equation}\label{Eq.EstimatesSumsPrimes}
    \sum_{p}\frac{(\log p)^2}{p^{2\re(z)}}\asymp V_z^2 \ \text{ and } \ \sum_{p}\frac{(\log p}{p^{2\re(z)}} \asymp V_z,
    \end{equation}
    by partial summation and the prime number theorem.
    \end{proof}

\section{A discrepancy bound for the distribution of $-\frac{L'}{L}(s, \chi_d)$}\label{Section:Discrepancy}
Throughout this section we let $\nu$ be a positive function such that $\nu(x)\to\infty$ as $x\to \infty$. Let $x$ be large and $z$ be a real number such that $1/2+\nu(x)/\log x\leq z\leq 1$. Put $y=\exp\big(20V_z \log(\log x/V_z)\big)$, and  define 
\begin{align*}
\mc{D}_{z}(x):=\{d\in \mc{D}(x)\colon \sigma_{y, d} = 1/2+4/\log y\}.
\end{align*} 
Then $|\D_z(x)|\sim |\D(x)|$ by Lemma \ref{Lemma: exceptional d}. Moreover, for any real number $u$, we define
$$ \Phi_{x, z}(u):= \frac{1}{|\mc{D}_z(x)|}\sum_{d\in \mc{D}_z(x)} \exp\left(2\pi i  u \frac{\Ld(z)}{V_z}\right),$$
and 
$$ \Phi_{\textup{rand}, z}(u)= \ex\left[ \exp\left(2\pi i  u \frac{\Lrand(z)}{V_z}\right)\right].
$$
Furthermore, we define the ``discrepancy'' between the distribution functions of $\Ld(z)/V_z$ and $\Lrand(z)/V_z$ as 
\begin{align*}
 D(z):=\sup_{t\in \mathbb{R}}\bigg|\dfrac{1}{|\mc{D}_z(x)|}|\{d\in \mc{D}_z(x)\colon \mc{L}_d(z)/V_z\leq t\}|-\mb{P}\big(\mc{L}_{\text{rand}}(z)/V_z \leq t\big)\bigg|.
\end{align*}
The goal of this section is to prove the following theorem
\begin{thm}\label{Theorem: Discrepancy}

Let $1/2+\nu(x)/\log x\leq z\leq 1$ with $\nu(x)\to \infty$ as $x\to \infty$. Then, we have
$$
    D(z)\ll \bigg(\dfrac{V_z\log (\log x/V_z)}{\log x}\bigg)^{1/2}.
    $$
    \end{thm}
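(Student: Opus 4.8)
The plan is to use the standard Beurling--Selberg/Erd\H{o}s--Tur\'an approach for bounding a discrepancy in terms of characteristic functions, combined with the moment estimates already available. The Erd\H{o}s--Tur\'an inequality (or rather its continuous analogue, since $\mc{L}_d(z)/V_z$ is a real-valued quantity on an essentially unbounded range) reduces $D(z)$ to controlling, for a well-chosen parameter $T = T(z) \to \infty$, the integral
$$
\int_{-T}^{T} \frac{1}{|u|}\,\bigl|\Phi_{x,z}(u) - \Phi_{\textup{rand},z}(u)\bigr|\,du,
$$
together with a tail term of size $\ll 1/T$ coming from the fact that the distributions are not compactly supported. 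To handle the tail one first needs the moment bound of Lemma~\ref{Lemma: Moment Bound}: it shows that both $\mc{L}_d(z)/V_z$ (for $d \in \mc{D}_z(x)$) and $\mc{L}_{\textup{rand}}(z)/V_z$ have sub-Gaussian-type tails on the scale $\sqrt{k}$, so choosing $T$ a small power of $\log x / V_z$ makes the tail negligible compared to the target bound. (Strictly speaking one uses a smoothed version of Erd\H{o}s--Tur\'an, e.g.\ convolving the indicator with a Beurling--Selberg majorant/minorant, so that only $\widehat{\Phi}$ on $[-T,T]$ enters, plus the tail bound; this is routine.)

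The heart of the matter is estimating $\Phi_{x,z}(u) - \Phi_{\textup{rand},z}(u)$ for $|u| \le T$. First I would Taylor-expand both characteristic functions: writing $\alpha = 2\pi i u / V_z$,
$$
\Phi_{x,z}(u) = \sum_{k=0}^{K-1} \frac{\alpha^k}{k!}\cdot \frac{1}{|\mc{D}_z(x)|}\sum_{d\in\mc{D}_z(x)} \mc{L}_d(z)^k \; + \; (\text{remainder}),
$$
and similarly for the random model, where $K$ is chosen as large as the mean-value machinery permits, namely $K \asymp \log x / \log y$ with $y = \exp(20 V_z \log(\log x/V_z))$ as in the statement. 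The remainder in each expansion is controlled by the $(2K)$-th moment bound (Lemma~\ref{Lemma: Moment Bound}) combined with $(C k V_z^2)^k$ decay, and since $|\alpha| \le 2\pi T/V_z$ with $T$ a small power of $\log x$, these remainders are super-polynomially small. For the main terms $k < K$, the key input is that $\mc{L}_d(z)$ is, for $d \in \mc{D}_z(x)$, within $O(y^{-1/(2V_z)}(\cdots))$ of the short Dirichlet polynomial $\sum_{n\le y^3} \Lambda_{y,d}(n) n^{-z}$ (Lemma~\ref{Lemma: analogue of Selberg}), whose $k$-th moments over $d$ match those of the random polynomial $\sum_{n\le y^3}\Lambda(n)w_y(n)\X(n)n^{-z}$ up to an error $O(x^{-1/5}(Cy^3)^k)$ by Lemma~\ref{lem.AsympMoments} (applied with $Y = y^3$ and $b(n) = \Lambda(n)w_y(n)n^{-z}/\!\log y$ or similar scaling, after checking the hypotheses on the coefficients); finally the random short polynomial is close to $\mc{L}_{\textup{rand}}(z)$ by extending the sum to infinity, again controlled by the moment bounds for the tail $\sum_{n > y^3}$. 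Assembling: each $k$-th moment of $\mc{L}_d(z)/V_z$ differs from that of $\mc{L}_{\textup{rand}}(z)/V_z$ by $\ll (Ck)^{k} \big(y^{-1/(2V_z)} + x^{-1/5}y^{3k} + (\text{tail}) \big)$, and multiplying by $|\alpha|^k/k! \le (CT/(kV_z))^k$ and summing over $k < K$, the dominant contribution comes from the factor $y^{-1/(2V_z)} = (\log x/V_z)^{-10}$ being traded against the Erd\H{o}s--Tur\'an weight. Optimizing $T$ — taking $T$ of order a suitable power so that the tail term $1/T$ balances the main contribution — yields $D(z) \ll (V_z \log(\log x/V_z)/\log x)^{1/2}$.

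The main obstacle, and the step requiring the most care, is the bookkeeping of how the various error terms (the Selberg approximation error $y^{-1/(2V_z)}(\cdots)$, the orthogonality error $x^{-1/5}(Cy^3)^k$, the tail of the infinite Dirichlet series, and the Taylor remainder) propagate through the moment computation and then through the Erd\H{o}s--Tur\'an sum over $k$. In particular one must verify that the choice $y = \exp(20 V_z \log(\log x / V_z))$ is simultaneously (i) small enough that $y^{3K} x^{-1/5}$ and $y^{3K} \cdot (\text{stuff})$ stay under control for $K \asymp \log x/\log y$, (ii) large enough that $y^{-1/(2V_z)} = (\log x/V_z)^{-10}$ beats the target discrepancy with room to spare, and (iii) compatible with $d \in \mc{D}_z(x)$ so that $\sigma_{y,d} = 1/2 + 4/\log y \le \re(z)$, which needs $4/\log y \le \nu(x)/\log x$, i.e.\ exactly the regime $z \ge 1/2 + \nu(x)/\log x$. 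The factor-of-$2$ discrepancy between the exponent $10$ in Lemma~\ref{Lemma: Moment Bound} and the exponent $20$ here is precisely the slack that makes the Selberg error term summable after pairing with the combinatorial growth; tracking this constant carefully is the crux. Everything else — the Beurling--Selberg smoothing, the prime-sum estimates \eqref{Eq.EstimatesSumsPrimes}, Stirling — is standard.
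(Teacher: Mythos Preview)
Your overall strategy is the same as the paper's: reduce $D(z)$ to a comparison of characteristic functions via a Berry--Esseen type inequality, then estimate $\Phi_{x,z}(u)-\Phi_{\textup{rand},z}(u)$ by Taylor-expanding both to order $\asymp \log x/\log y$ and matching the truncated moments through the orthogonality relation (this is exactly the content of Lemma~\ref{Lemma: discrepancy} and Proposition~\ref{Prop: Characteristic}). The optimal cutoff is $T(z)\asymp\sqrt{\log x/(V_z\log(\log x/V_z))}$, which balances $1/T$ against the integral of $|u|^{-1}\cdot O(|u|V_z^4/(\log x)^4)$ over $|u|\le T$.

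There is, however, one genuine gap in your outline. The $1/T$ term in the Berry--Esseen inequality does \emph{not} come from tail behaviour of the distributions, and sub-Gaussian moment bounds (which control $\pr(|X|>A)$ for large $A$) are irrelevant for it. What the Berry--Esseen inequality actually requires is that the \emph{limiting} distribution $\Lrand(z)/V_z$ have a bounded density, uniformly in $z$; the $1/T$ term carries that density bound as its implicit constant. In the Beurling--Selberg language you invoke, the same issue reappears: the majorant/minorant pair $S^{\pm}$ for $\mathbf{1}_{(-\infty,t]}$ differ on an interval of length $\asymp 1/T$, and to say this costs only $O(1/T)$ one must know that the random model assigns mass $O(\varepsilon)$ to every interval of length $\varepsilon$. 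The paper proves this as a separate lemma (Lemma~\ref{Lemma: decay of characteristic of random model}): writing $\Lrand(z)$ as an Euler product and expanding each factor, one obtains $\Phi_{\textup{rand},z}(u)\ll\exp(-C_0|u|^{1/z}/(\log(|u|+1))^{2-1/z})$ uniformly in $1/2<z\le 1$, whence Fourier inversion gives a uniformly bounded density and the estimate \eqref{Eq.BoundSmallRandom}. This step is short but requires its own argument over primes and is not covered by anything in your outline; once you supply it, the remainder of your plan goes through.
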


We start by proving the following lemma.
\begin{lem}\label{Lemma: discrepancy}
    Let $x, \nu, z$ and $y$ be as above. Then, for all real numbers $u$ such that $(V_z/\log x)^2\leq |u|\leq (\log x/V_z)^5$, we have
    \begin{align*}
       \Phi_{x, z}(u) = \dfrac{1}{|\mc{D}(x)|}\sum_{d\in \mc{D}(x)}\exp\bigg(2\pi i\frac{u}{V_z}\sum_{n\leq y}\dfrac{\Lambda(n)\chi_{d}(n)}{n^z}\bigg) + O\bigg( |u|\frac{V_z^5}{(\log x)^5}\bigg).
    \end{align*}
\end{lem}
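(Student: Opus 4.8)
\emph{The plan} is to pass from $\Ld(z)$ to the short Dirichlet polynomial $\sum_{n\le y}\Lambda(n)\chi_d(n)n^{-z}$ in three stages, using at each stage that $z$ is real (so that $\chi_d(n)n^{-z}$, $\Ld(z)$, and all the exponents below are real), via the elementary inequality $|e^{i\alpha}-e^{i\beta}|\le|\alpha-\beta|$ for $\alpha,\beta\in\mathbb{R}$, followed by Cauchy--Schwarz and Lemma~\ref{Lem.LargeSieve} with $k=1$. Write $W:=(z-1/2)\log x=\log x/V_z$, so that $W\ge\nu(x)\to\infty$ and $\log y=20V_z\log W$, hence $\log x/\log y=W/(20\log W)\to\infty$; in particular, for $x$ large all the size constraints needed below hold ($y\le x$, $y\ge10$, $30\log y\le\log x$, and the hypothesis of Lemma~\ref{Lemma: exceptional d}), and $V_z<\log x$.

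\emph{Stage 1 (Selberg's approximation).} For $d\in\mc{D}_z(x)$ we have $\sigma_{y,d}=1/2+4/\log y\le z$, so \eqref{Eq.ApproxL_dDirichlet} yields $\Ld(z)=\sum_{n\le y^3}\Lambda_{y,d}(n)n^{-z}+O\big(y^{-1/(2V_z)}(|\sum_{n\le y^3}\Lambda_{y,d}(n)n^{-\sigma_{y,d}}|+\log x)\big)$, where $y^{-1/(2V_z)}=e^{-10\log W}=(V_z/\log x)^{10}$. Applying $|e^{i\alpha}-e^{i\beta}|\le|\alpha-\beta|$ (with $\alpha=2\pi u\Ld(z)/V_z$, $\beta=2\pi\frac{u}{V_z}\sum_{n\le y^3}\Lambda_{y,d}(n)n^{-z}$) and averaging over $\mc{D}_z(x)$, the cost of this replacement is $\ll\frac{|u|}{V_z}(V_z/\log x)^{10}\big(|\mc{D}_z(x)|^{-1}\sum_{d\in\mc{D}_z(x)}|\sum_{n\le y^3}\Lambda_{y,d}(n)n^{-\sigma_{y,d}}|+\log x\big)$. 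Since $\Lambda_{y,d}(n)n^{-\sigma_{y,d}}=a(n)\Lambda(n)\chi_d(n)n^{-1/2}$ with $a(n)=w_y(n)n^{-4/\log y}\in[0,1]$, I would split off the $O(1)$ contribution of $n<10$ and apply Lemma~\ref{Lem.LargeSieve} with $k=1$, together with $\sum_{p\le y^3}(\log p)^2/p\asymp(\log y)^2$ and $\sum_{p\le y^{3/2}}(\log p)/p\asymp\log y$, to bound the relevant second moment first over $\mc{D}(x)$ and then, the summand being nonnegative and $|\mc{D}_z(x)|\sim|\mc{D}(x)|$, over $\mc{D}_z(x)$, by $\ll(\log y)^2$; Cauchy--Schwarz then gives $\ll\log y\le\log x$ for the first moment. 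Hence Stage~1 costs $\ll\frac{|u|}{V_z}(V_z/\log x)^{10}\log x=|u|(V_z/\log x)^9\ll|u|V_z^5/(\log x)^5$.

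\emph{Stage 2 (truncation) and Stage 3 (passage to the full family).} Because $w_y(n)=1$ for $n\le y$, the difference between $\sum_{n\le y^3}\Lambda_{y,d}(n)n^{-z}$ and $\sum_{n\le y}\Lambda(n)\chi_d(n)n^{-z}$ is $T_d:=\sum_{y<n\le y^3}w_y(n)\Lambda(n)\chi_d(n)n^{-z}=\sum_{y<n\le y^3}b(n)\Lambda(n)\chi_d(n)n^{-1/2}$ with $b(n)=w_y(n)n^{-1/V_z}$, and $|b(n)|\le y^{-1/V_z}=e^{-20\log W}=(V_z/\log x)^{20}$ throughout this range. Exactly as in Stage~1, the cost is $\ll\frac{|u|}{V_z}\big(|\mc{D}_z(x)|^{-1}\sum_{d\in\mc{D}_z(x)}|T_d|^2\big)^{1/2}$; factoring out $(V_z/\log x)^{20}$ and applying Lemma~\ref{Lem.LargeSieve} with $k=1$ (the endpoint $n=y$ contributing a negligible $O(\log y/\sqrt y)$) gives $|\mc{D}(x)|^{-1}\sum_d|T_d|^2\ll(V_z/\log x)^{40}(\log y)^2$, hence the same over $\mc{D}_z(x)$, so with $\log y=20V_z\log W$ Stage~2 costs $\ll|u|(V_z/\log x)^{20}\log W\ll|u|(V_z/\log x)^{19}\ll|u|V_z^5/(\log x)^5$ (using $\log W\le W$). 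Finally, replacing $|\mc{D}_z(x)|^{-1}\sum_{d\in\mc{D}_z(x)}$ by $|\mc{D}(x)|^{-1}\sum_{d\in\mc{D}(x)}$ in front of the unit-modulus exponential $\exp(2\pi i\frac{u}{V_z}\sum_{n\le y}\Lambda(n)\chi_d(n)n^{-z})$ changes the average by at most $2|\mc{D}(x)\setminus\mc{D}_z(x)|/|\mc{D}(x)|\ll e^{-C_0\log x/\log y}=e^{-C_0W/(20\log W)}$ by Lemma~\ref{Lemma: exceptional d}; since $|u|\ge(V_z/\log x)^2=W^{-2}$ we have $|u|V_z^5/(\log x)^5\ge W^{-7}$, which dominates $e^{-C_0W/(20\log W)}$ for $x$ large because $W/(\log W)^2\to\infty$. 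Summing the three costs yields the lemma.

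\emph{Main obstacle.} The computation is essentially bookkeeping, but it is sensitive: everything hinges on the precise choice $y=\exp\big(20V_z\log(\log x/V_z)\big)$, which is exactly what turns $y^{-1/V_z}$ into the large negative power $(V_z/\log x)^{20}$ of $\log x$, and one must check that this genuine saving survives the competing losses $|u|/V_z$, $\log y$ and $\log d$, landing precisely at $|u|V_z^5/(\log x)^5$. The other delicate point is the last step, where the lower restriction $|u|\ge(V_z/\log x)^2$ is exactly what allows the super-polynomially small exceptional-set contribution $e^{-C_0W/(20\log W)}$ to be absorbed into $|u|V_z^5/(\log x)^5$.
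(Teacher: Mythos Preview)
Your proof is correct and follows essentially the same three-stage scheme as the paper: replace $\Ld(z)$ by the smoothed sum $\sum_{n\le y^3}\Lambda_{y,d}(n)n^{-z}$ via \eqref{Eq.ApproxL_dDirichlet}, truncate to $n\le y$, then pass from $\mc D_z(x)$ to $\mc D(x)$ using Lemma~\ref{Lemma: exceptional d}; in each case the error is controlled by $|e^{i\alpha}-e^{i\beta}|\le|\alpha-\beta|$, Cauchy--Schwarz, and Lemma~\ref{Lem.LargeSieve}. The only cosmetic difference is in Stage~2, where the paper bounds $\sum_{y<n\le y^3}\Lambda_{y,d}(n)n^{-z}$ via the tail estimates \eqref{Eq.EstimateTailPrimes1}--\eqref{Eq.EstimateTailPrimes2}, whereas you simply factor out $n^{-1/V_z}\le y^{-1/V_z}=(V_z/\log x)^{20}$ before invoking Lemma~\ref{Lem.LargeSieve}; both give more than enough saving.
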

\begin{proof}
     By Lemma \ref{Lemma: analogue of Selberg}, we have
    \begin{align*}
        \sum_{d\in \mc{D}_z(x)} \exp\left(2\pi i  u \frac{\Ld(z)}{V_z}\right)=\sum_{d\in \mc{D}_z(x)}\exp\bigg(2\pi i\dfrac{u}{V_z}\sum_{n\leq y^3}\dfrac{\Lambda_{y, d}(n)}{n^z}\bigg) + E_1,
    \end{align*}
    where
    \begin{align*}
        E_1\ll \dfrac{|u|}{V_z}y^{-1/(2V_z)}\bigg(\sum_{d\in \mc{D}_z(x)}\bigg|\sum_{n\leq y^3}\dfrac{\Lambda_{y, \: d}(n)}{n^{\sigma_{y, d}}}\bigg| + x\log x\bigg).
    \end{align*}
    By the Cauchy-Schwarz inequality and Lemma \ref{Lem.LargeSieve}, we have
\begin{align*}
    \sum_{d\in \mc{D}_z(x)}\bigg|\sum_{n\leq y^3}\dfrac{\Lambda_{y, \: d}(n)}{n^{\sigma_{y, d}}}\bigg|
        &\leq x^{1/2}\bigg(\sum_{d\in \mc{D}(x)}\bigg|\sum_{n\leq y^3}\dfrac{\Lambda_{y, \: d}(n)}{n^{\sigma_{y, d}}}\bigg|^2\bigg)^{1/2}\\
        &\ll x\bigg(\sum_{p\leq y^3}\dfrac{(\log p)^2}{p^{2\sigma_{y, d}}}\bigg)^{1/2} + x \bigg(\sum_{p\leq y^{3/2}}\dfrac{\log p}{p^{2\sigma_{y, d}}}\bigg) \ll x\log x,
    \end{align*}
    since $2\sigma_{y, d}>1$. Hence, we get
    \begin{align*}
         \sum_{d\in \mc{D}_z(x)} \exp\left(2\pi i  u \frac{\Ld(z)}{V_z}\right)=\sum_{d\in \mc{D}_z(x)}\exp\bigg(2\pi i\dfrac{u}{V_z}\sum_{n\leq y^3}\dfrac{\Lambda_{y, d}(n)}{n^z}\bigg) + O\bigg(x|u| \Big(\frac{V_z}{\log x}\Big)^9\bigg),
    \end{align*}
since $y^{-1/V_z}= (V_z/\log x)^{20}$. Next, we write
\begin{align*}
    \sum_{d\in \mc{D}_z(x)}\exp\bigg(2\pi i\dfrac{u}{V_z}\sum_{n\leq y^3}\dfrac{\Lambda_{y, d}(n)}{n^z}\bigg)=\sum_{d\in \mc{D}_z(x)}\exp\bigg(2\pi i\dfrac{u}{V_z}\sum_{n\leq y}\dfrac{\Lambda(n)\chi_{d}(n)}{n^z}\bigg) + E_2,
\end{align*}
where
\begin{align*}
    E_2
    & \ll\dfrac{|u|}{V_z}\sum_{d\in \mc{D}_z(x)}\bigg|\sum_{n\leq y^3}\dfrac{\Lambda_{y, d}(n)}{n^z}-\sum_{n\leq y}\dfrac{\Lambda(n)\chi_{d}(n)}{n^z}\bigg|\\
    &\ll \dfrac{|u|}{V_z}\sum_{d\in \mc{D}(x)}\bigg|\sum_{y< n\leq  y^3}\dfrac{\Lambda_{y, d}(n)}{n^z}\bigg|,
\end{align*}
using the definition of $\Lambda_{y, \:d}$. Applying the Cauchy-Schwarz inequality and Lemma \ref{Lem.LargeSieve}  we obtain
\begin{align*}
\sum_{d\in \mc{D}(x)}\bigg|\sum_{y<n\leq y^3}\dfrac{\Lambda_{y, d}(n)}{n^z}\bigg|& \leq x^{1/2}\bigg(\sum_{d\in \mc{D}(x)}\bigg|\sum_{y<n\leq y^3}\dfrac{\Lambda_{y, \: d}(n)}{n^z}\bigg|^2\bigg)^{1/2}\\
        &\ll x\bigg(\sum_{y<p\leq y^3}\dfrac{(\log p)^2}{p^{2z}}\bigg)^{1/2} + x \bigg(\sum_{\sqrt{y}< p\leq y^{3/2}}\dfrac{\log p}{p^{2z}}\bigg) +x y^{-1/6} \\
&\ll y^{-(z-1/2)/3}\log x\ll \frac{V_z^6}{(\log x)^5},
\end{align*}
since 
\begin{equation}
\label{Eq.EstimateTailPrimes1}
\sum_{p>y}\dfrac{(\log p)^2}{p^{2z}}\ll \frac{\log y}{y^{2z-1} (z-1/2)}+ \frac{1}{(z-1/2)^2 y^{2z-1}},
\end{equation} 
and  
\begin{equation}\label{Eq.EstimateTailPrimes2}
\sum_{p>\sqrt{y}}\dfrac{\log p}{p^{2z}} \ll \frac{1}{y^{z-1/2}(z-1/2)},
\end{equation}
by partial summation and the prime number theorem. Finally, we note that 
$$ \frac{1}{|\mc{D}_z(x)|}\sum_{d\in \mc{D}_z(x)}\exp\bigg(2\pi i\dfrac{u}{V_z}\sum_{n\leq y}\dfrac{\Lambda(n)\chi_{d}(n)}{n^z}\bigg) = \frac{1}{|\D(x)|}\sum_{d\in \mc{D}(x)}\exp\bigg(2\pi i\dfrac{u}{V_z}\sum_{n\leq y}\dfrac{\Lambda(n)\chi_{d}(n)}{n^z}\bigg) +E_3, $$
where 
$$ E_3\ll \frac{|\Df\setminus\D_z(x)|}{|\D(x)|}\ll \exp\Big(-\theta \frac{\log x}{\log y}\Big),$$
by Lemma \ref{Lemma: exceptional d}.
Collecting the above estimates completes the proof.
\end{proof}

\begin{pro}\label{Prop: Characteristic}
Let $x$, $\nu$, $z$ and $y$ be as above. There exists a constant $c_1>0$ such that for all real numbers $u$ with $(V_z/\log x)^2\leq |u|\leq c_1\sqrt{\log x/(V_z\log (\log x/V_z))}$, we have
    \begin{align*}
       \Phi_{x, z}(u)=\Phi_{\textup{rand}, z}(u) + O\bigg(|u|\frac{V_z^4}{(\log x)^{4}}\bigg).
    \end{align*}
\end{pro}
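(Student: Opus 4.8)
\medskip
The plan is to reduce both $\Phi_{x,z}(u)$ and $\Phi_{\textup{rand},z}(u)$ to exponentials of the \emph{same} short Dirichlet polynomial supported on $n\le y$, then to Taylor‑expand the exponentials to a carefully chosen order and match the resulting moments term by term via Lemma~\ref{lem.AsympMoments}, controlling the tails of the Taylor series with the large‑sieve moment bounds of Lemma~\ref{Lem.LargeSieve}. For $\Phi_{x,z}(u)$ this reduction is exactly Lemma~\ref{Lemma: discrepancy}: setting $A_d:=\sum_{n\le y}\Lambda(n)\chi_d(n)n^{-z}$ we have $\Phi_{x,z}(u)=\frac{1}{|\mathcal D(x)|}\sum_{d\in\mathcal D(x)}\exp\!\big(2\pi i\tfrac{u}{V_z}A_d\big)+O\big(|u|V_z^5(\log x)^{-5}\big)$, and this error is dwarfed by the claimed $|u|V_z^4(\log x)^{-4}$. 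For $\Phi_{\textup{rand},z}(u)$ we truncate directly: with $B:=\sum_{n\le y}\Lambda(n)\X(n)n^{-z}$, the bound $|e^{i\theta}-1|\le|\theta|$ and Cauchy--Schwarz give $\big|\Phi_{\textup{rand},z}(u)-\ex[\exp(2\pi i\tfrac{u}{V_z}B)]\big|\le \tfrac{2\pi|u|}{V_z}\big(\ex|\Lrand(z)-B|^2\big)^{1/2}$, and the random analogue of Lemma~\ref{Lem.LargeSieve} (with $k=1$) together with \eqref{Eq.EstimateTailPrimes1}--\eqref{Eq.EstimateTailPrimes2} and $y=\exp(20V_z\log(\log x/V_z))$ yields $\ex|\Lrand(z)-B|^2\ll V_z^2\log(\log x/V_z)\,(V_z/\log x)^{40}$; hence this error is $\ll|u|\sqrt{\log(\log x/V_z)}\,(V_z/\log x)^{20}$, again safely inside the claimed bound (using $V_z/\log x\le 1/\nu(x)$).

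It remains to compare $\frac{1}{|\mathcal D(x)|}\sum_d e^{2\pi i uA_d/V_z}$ with $\ex[e^{2\pi i uB/V_z}]$. Fix $K=2m-1$ with $m\asymp \log x/(\log y+\log\log x)$; this satisfies $K\le \log x/(10\log y)$ and, since $\log y\gg\log\log x$, also $m\to\infty$. Writing $e^w=\sum_{0\le k\le K}w^k/k!+O(|w|^{K+1}/(K+1)!)$ for $w=2\pi i uA_d/V_z$ and $w=2\pi i uB/V_z$, and using $|\Lambda(n)n^{-z}|\le\log y$ for $2\le n\le y$, Lemma~\ref{lem.AsympMoments} gives $\frac{1}{|\mathcal D(x)|}\sum_d A_d^{k}=\ex[B^{k}]+O\big(x^{-1/5}(y\log y)^{k}\big)$ for every $0\le k\le K$. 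Summing these errors over $k$: since $\beta:=2\pi|u|y\log y/V_z$ satisfies $\beta>K$ (indeed $y\ge(\tfrac12\log x)^{40}$ forces $\beta\gg(\log x)^{38}$) and $\log\beta\le\log y+2\log\log x\ll \log x/K$ (here $y\le x$ and $|u|\le\sqrt{\log x}$), one has $\sum_{0\le k\le K}\beta^k/k!\ll K(e\beta/K)^{K}\ll (\log x)x^{1/10}$, so the accumulated matching error is $O(x^{-1/10}\log x)$.

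For the two Taylor remainders we estimate $\frac{1}{|\mathcal D(x)|}\sum_d|A_d|^{2m}$ and $\ex|B|^{2m}$. Peeling off the prime‑square and higher‑prime‑power parts of $A_d$ and $B$ (which are $O(V_z)$ by \eqref{Eq.EstimateTailPrimes2}) and applying Lemma~\ref{Lem.LargeSieve} with $a(n)=n^{1/2-z}$---legitimate as $m\le\log x/(10\log y)$ and $\sum_{p\le y}\frac{(\log p)^2}{p^{2z}}\asymp V_z^2$, $\sum_{p\le\sqrt y}\frac{\log p}{p^{2z}}\asymp V_z$ by \eqref{Eq.EstimatesSumsPrimes}---we obtain $\frac{1}{|\mathcal D(x)|}\sum_d|A_d|^{2m}\ll (CmV_z^2)^m$ and $\ex|B|^{2m}\ll (CmV_z^2)^m$. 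Combined with Stirling's inequality $(2m)!\gg(2m/e)^{2m}$, each Taylor remainder is $\ll\big(C'u^2/m\big)^m$.

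Finally, the hypothesis $|u|\le c_1\sqrt{\log x/(V_z\log(\log x/V_z))}$ is precisely what forces $u^2/m\le C''c_1^2$ (using $\log y=20V_z\log(\log x/V_z)$ and $V_z\log(\log x/V_z)\gg\log\log x$), so taking $c_1$ small makes both remainders $\ll 2^{-m}$; and writing $\kappa:=(z-\tfrac12)\log x\ge\nu(x)\to\infty$, one has $m\gg\kappa/\log\kappa\gg\log\kappa$ for $x$ large, whence $2^{-m}\ll\kappa^{-6}=(V_z/\log x)^6\le|u|(V_z/\log x)^4$ because $|u|\ge(V_z/\log x)^2$. Adding the accumulated matching error to the two remainders completes the proof. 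The delicate point---and the main obstacle---is this simultaneous calibration: $K$ must be large enough for $2^{-m}$ to beat the target $|u|V_z^4(\log x)^{-4}$ (forcing $m\gg\log\kappa$) yet small enough for the accumulated moment‑matching error $x^{-1/5}(e\beta/K)^K$ to stay negligible (forcing $K\ll\log x/(\log y+\log\log x)$), and the stated range of $|u|$ is exactly what reconciles these while keeping $u^2\ll Km$.
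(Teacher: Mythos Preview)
Your proof is correct and follows the same overall strategy as the paper: reduce both characteristic functions to exponentials of the short Dirichlet polynomial over $n\le y$, Taylor-expand to order $\asymp \log x/\log y$, match the moments via Lemma~\ref{lem.AsympMoments}, and control the remainders with Lemma~\ref{Lem.LargeSieve}. The choices of truncation length and the final absorption $2^{-m}\ll (V_z/\log x)^6\le |u|(V_z/\log x)^4$ mirror the paper's $e^{-N}$ bound.

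The one genuine (and welcome) difference is your treatment of the random side. The paper introduces an event $\mathcal A=\{|\sum_{n>y}\Lambda(n)\X(n)n^{-z}|\le V_z^6/(\log x)^5\}$, shows $\pr(\mathcal A^c)\ll\exp(-\log x/V_z)$ via a high-moment Markov argument, and then replaces $\Lrand(z)$ by the truncated sum on $\mathcal A$. You instead use $|e^{i\theta}-1|\le|\theta|$ and Cauchy--Schwarz directly to get
\[
\big|\Phi_{\textup{rand},z}(u)-\ex[e^{2\pi i uB/V_z}]\big|\le \frac{2\pi|u|}{V_z}\big(\ex|\Lrand(z)-B|^2\big)^{1/2}\ll |u|\sqrt{\log(\log x/V_z)}\,(V_z/\log x)^{20},
\]
which is shorter and already far inside the target error. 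Two minor cosmetic points: your bound $|\Lambda(n)n^{-z}|\le\log y$ is wasteful (one may take $C=1$ in Lemma~\ref{lem.AsympMoments} since $\log n/\sqrt n\le 1$), and the citation for the $O(V_z)$ bound on the prime-square part should be \eqref{Eq.EstimatesSumsPrimes} rather than \eqref{Eq.EstimateTailPrimes2}; neither affects the argument.
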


\begin{proof}

By Lemma \ref{Lemma: discrepancy}, we have
\begin{align*}
   \Phi_{x, z}(u)= \dfrac{1}{|\mc{D}(x)|}\sum_{d\in \mc{D}(x)} \exp\bigg(2\pi i\frac{u}{V_z}\sum_{n\leq y}\dfrac{\Lambda(n)\chi_{d}(n)}{n^z}\bigg) + O\bigg(|u|\frac{V_z^5}{(\log x)^5}\bigg).
\end{align*} 
   Next, we deal with the main term in the above expression. Let $N=\lfloor(\log x)/(50 \log y)\rfloor$. By applying the Taylor expansion of $e^{2\pi it}$ for real $t$, we see that
   \begin{align*}
       \dfrac{1}{|\mc{D}(x)|}&\sum_{d\in \mc{D}(x)} \exp\bigg(2\pi i\frac{u}{V_z}\sum_{n\leq y}\dfrac{\Lambda(n)\chi_{d}(n)}{n^z}\bigg)\\
       &=\sum_{k=0}^{2N-1}\dfrac{(2\pi iu)^k}{V_z^k k!} \dfrac{1}{|\mc{D}(x)|}\sum_{d\in \mc{D}(x)}\bigg(\sum_{n\leq y}\dfrac{\Lambda(n)\chi_{d}(n)}{n^z}\bigg)^k + E_4,
   \end{align*}
   where
   \begin{align*}
       E_4
       &\ll \dfrac{(2\pi u)^{2N}}{V_z^{2N}(2N)!}\dfrac{1}{|\mc{D}(x)|}\sum_{d\in \mc{D}(x)}\bigg(\sum_{n\leq y}\dfrac{\Lambda(n)\chi_{d}(n)}{n^z}\bigg)^{2N}\\
       &\ll \dfrac{(2\pi u)^{2N}}{V_z^{2N}(2N)!} \cdot (c_2NV_z^2)^N\ll (c_3u^2/N)^N\ll e^{-N},
   \end{align*}
  for some positive constants $c_2, c_3$, where the second inequality follows by the same calculations leading to \eqref{Eq.BoundMomentsLdSpecial}, and the third from    Stirling's formula.  Therefore,
   \begin{align}\label{eq: Discrepancy prop 1}
       \Phi_{x, z}(u)=\sum_{k=0}^{2N-1}\dfrac{(2\pi iu)^k}{V_z^k k!} \dfrac{1}{|\mc{D}(x)|}\sum_{d\in \mc{D}(x)}\bigg(\sum_{n\leq y}\dfrac{\Lambda(n)\chi_{d}(n)}{n^z}\bigg)^k + O\bigg(|u|\frac{V_z^5}{(\log x)^5}\bigg).
   \end{align}
    On the other hand, by Lemma \ref{lem.AsympMoments}, we have
\begin{align}\label{eq: Discrepancy prop 2}
     \notag  \bigg|\sum_{k=0}^{2N-1}&\dfrac{(2\pi iu)^k}{V_z^k k!} \bigg(\dfrac{1}{|\mc{D}(x)|}\sum_{d\in \mc{D}(x)}\bigg(\sum_{n\leq y}\dfrac{\Lambda(n)\chi_{d}(n)}{n^z}\bigg)^k-\mathbb{E}\bigg(\sum_{n\leq y}\dfrac{\Lambda(n)\mb{X}(n)}{n^z}\bigg)^k\bigg)\bigg|\\
       &\ll x^{-1/5}\sum_{k=0}^{2N-1}\bigg(\frac{c_4uy}{V_zk}\bigg)^k\ll x^{-1/5}Ny^{2N}\ll x^{-1/10},
   \end{align}
   which is negligible. Here we have used our assumptions on $u$ and $N$ to bound the sum over $k$. 

 We now handle the characteristic function of the random model. Let $\A$ denote the event
 $$ \bigg|\sum_{n>y} \frac{\Lambda(n)\X(n)}{n^z}\bigg|\leq  B:= \frac{V_z^6}{(\log x)^5}.
 $$
 Let $\kappa$ be a positive integer to be chosen. Then, by Markov's inequality and Lemma \ref{Lem.LargeSieve} (letting $z\to \infty$ therein) we obtain 
\begin{align*} \pr(\A^c)&\leq \frac{1}{B^{2\kappa}}\ex\bigg|\sum_{n>y} \frac{\Lambda(n)\X(n)}{n^z}\bigg|^{2\kappa}\\
& \ll \left(c_5 \frac{\kappa}{B^2} \sum_{p>y} \frac{(\log p)^2}{p^{2z}}\right)^{\kappa}+\left(c_6 \sum_{p>\sqrt{y}} \frac{\log p}{p^{2z}}\right)^{2\kappa}
\ll \left(c_7\frac{\kappa V_z\log y}{B^2 y^{2z-1}} \right)^{k},
\end{align*}
for some positive constants $c_5, c_6$ and $c_7$, where the last bound follows from \eqref{Eq.EstimateTailPrimes1} and \eqref{Eq.EstimateTailPrimes2}. Choosing $\kappa=\lfloor B^2y^{2z-1}/(ec_7 V_z\log y)\rfloor$ and using that $y^{2z-1}= (\log x)^{40}/V_z^{40}$ we deduce that
$$ \pr(\A^c) \ll e^{-\kappa}\ll \exp\left(-\frac{\log x}{V_z}\right).$$
Letting $\mathbf{1}_{\A}$ denote the indicator function of the event $\A$, we therefore get
\begin{equation}\label{Eq.ApproxCharModel}
\begin{aligned}\Phi_{\text{rand}, z}(u)&=\ex\left[ 
\mathbf{1}_{\A}\cdot\exp\left(2\pi i  u \frac{\Lrand(z)}{V_z}\right)\right]+ O\left(\exp\left(-\frac{\log x}{V_z}\right)\right)\\
&=\ex\left[ 
\mathbf{1}_{\A}\cdot\exp\left(\frac{2\pi i  u}{V_z}\sum_{n\leq y}\frac{\Lambda(n)\X(n)}{n^z}+ O\left(\frac{|u| V_z^5}{(\log x)^5}\right) \right)\right]+ O\left(\exp\left(-\frac{\log x}{V_z}\right)\right)\\
& = \ex\left[ 
\exp\left(\frac{2\pi i  u}{V_z}\sum_{n\leq y}\frac{\Lambda(n)\X(n)}{n^z} \right)\right]+ O\left(\frac{|u| V_z^5}{(\log x)^5}\right).
\end{aligned}
\end{equation}
Next, by the same argument leading to \eqref{eq: Discrepancy prop 1} together Lemma \ref{Lem.LargeSieve}, we obtain
 \begin{align}\label{eq: Discrepancy prop 3}
     \ex\left[ 
\exp\left(\frac{2\pi i  u}{V_z}\sum_{n\leq y}\frac{\Lambda(n)\X(n)}{n^z} \right)\right]=\sum_{k=0}^{2N-1}\dfrac{(2\pi i u)^k}{V_z^kk!}\mathbb{E}\bigg(\sum_{n\leq y}\dfrac{\Lambda(n)\mb{X}(n)}{n^z}\bigg)^k + O(e^{-N}).
 \end{align}
 Combining \eqref{eq: Discrepancy prop 1}, \eqref{eq: Discrepancy prop 2}, \eqref{Eq.ApproxCharModel} and \eqref{eq: Discrepancy prop 3} completes the proof.
 \end{proof}

 Next, we show that the characteristic function of $\Lrand(z)/V_z$ decays exponentially on $\mathbb{R}$, uniformly in $1/2<z\leq 1$.

\begin{lem}\label{Lemma: decay of characteristic of random model}
Let $1/2<z\leq 1$. Then, there exists an absolute constant $C_0>0$  such that for all $u\in \mathbb{R}$ we have 
\begin{align*}
\Phi_{\textup{rand}, z}(u)\ll \exp\left(-C_0\frac{|u|^{1/z}}{\log(|u|+1)^{2-1/z}}\right).
\end{align*}
\end{lem}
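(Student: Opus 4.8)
The plan is to estimate the characteristic function $\Phi_{\textup{rand}, z}(u)=\ex\exp(2\pi i u \Lrand(z)/V_z)$ by exploiting the independence of the $\X(p)$. Since $\Lrand(z)=\sum_n \Lambda(n)\X(n) n^{-z}$ and the contribution of prime powers $p^k$ with $k\geq 2$ is bounded (indeed $\sum_{k\geq 2}\sum_p (\log p) p^{-kz} = O(1)$ uniformly for $z>1/2$, though this needs a tiny bit of care as $z\to 1/2$: actually $\sum_p (\log p) p^{-2z}\asymp V_z$, so the square terms are not $O(1)$ but $O(V_z)$, and one must keep track of this), the main object is really $\sum_p (\log p)\X(p) p^{-z}$. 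By independence, $\ex\exp\bigl(\frac{2\pi i u}{V_z}\sum_p \frac{(\log p)\X(p)}{p^z}\bigr)=\prod_p \ex\exp\bigl(\frac{2\pi i u (\log p)\X(p)}{V_z p^z}\bigr)$. For each $p>2$, $\ex\exp(i\theta \X(p)) = \frac{1}{p+1} + \frac{p}{p+1}\cos\theta = 1 - \frac{p}{p+1}(1-\cos\theta)$, so $|\ex\exp(i\theta\X(p))|\leq \exp\bigl(-\frac{p}{p+1}(1-\cos\theta)\bigr)\leq \exp\bigl(-\tfrac12(1-\cos\theta)\bigr)$ for $p>2$.

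The key step is then to lower-bound $\sum_p (1-\cos\theta_p)$ where $\theta_p = \frac{2\pi u (\log p)}{V_z p^z}$. The standard trick (as in Soundararajan-type large deviation arguments, or Lamzouri's work on the distribution of $L$-function values) is to restrict attention to primes $p$ in a dyadic-type range where $\theta_p$ is of order a constant — say $|\theta_p|\in[\delta_0, \pi-\delta_0]$ or simply $\theta_p$ bounded away from $2\pi\Z$ — because there $1-\cos\theta_p\gg 1$. Concretely, we want $p$ with $|u|(\log p) p^{-z}/V_z \asymp 1$, i.e. $p^z\asymp |u|(\log p)/V_z$, i.e. $\log p\asymp \frac{1}{z}\log(|u|/V_z \cdot \log p)$; solving, $\log p \asymp \frac1z \log|u|$ (up to $\log\log|u|$ corrections), so the relevant primes live near $P:=\exp\bigl(\frac cz \log|u|\bigr)=|u|^{c/z}$ for a suitable small constant $c$, and more precisely in a window $[P, 2P]$ or $[P, P^{1+o(1)}]$ on which $\theta_p$ stays in a good range. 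The number of such primes is $\gg P/\log P \gg |u|^{c/z}/\log|u|$ by the prime number theorem, and each contributes $\gg 1$ to $\sum_p(1-\cos\theta_p)$. This gives $|\Phi_{\textup{rand},z}(u)|\ll \exp\bigl(-c' |u|^{1/z}/(\log|u|)^{2-1/z}\bigr)$ after optimizing the window and tracking the $\log$ powers; the exponent $2-1/z$ arises because one loses one factor of $\log P \asymp \frac1z\log|u|$ from the prime-counting and the normalization involves another power of $\log$ from the $\log p$ weight and the choice of $c$ proportional to $z$. One must also dispose of small $|u|$ (say $|u|\leq 10$) trivially since then the claimed bound is $O(1)$, and handle the regime where $P\leq 2$, which only occurs for bounded $u$.

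The main obstacle I anticipate is bookkeeping the dependence on $z$ uniformly as $z\to 1/2^+$, where $V_z\to\infty$: one must verify that the ``good window'' of primes near $P=|u|^{c/z}$ is nonempty and contains $\gg P/\log P$ primes on which $\theta_p$ is of constant size, and that the contribution of the prime-square terms $\sum_{p}(\log p)\X(p^2)p^{-2z}$ (which has size $\asymp V_z$, hence is genuinely present) does not overwhelm the decay — it should not, because that term is a fixed random variable independent of $u$ times nothing... actually $\theta_{p^2}$ also scales with $u$, but these terms are summed against $p^{-2z}$ which converges absolutely, so one can bound $|\ex\exp(\cdots)|\leq 1$ for them and simply drop them, at worst losing a bounded multiplicative constant. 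A secondary technical point is that when $z$ is close to $1$, $1/z$ is close to $1$ and $2-1/z$ close to $1$, so the bound degrades to roughly $\exp(-c|u|/\log|u|)$, which is consistent and fine. I would structure the proof as: (i) reduce to the prime sum and record $|\ex e^{i\theta\X(p)}|\le e^{-\frac12(1-\cos\theta)}$; (ii) choose the prime window $[P, \eta P]$ with $P = |u|^{\kappa/z}$ for explicit small $\kappa$ and $\eta$ depending on $z$; (iii) verify $\theta_p\in[\text{const},\text{const}]$ there; (iv) apply PNT to count primes and conclude; (v) handle small $u$. I would not grind the constants but would flag that $C_0$ is absolute and independent of $z$ (as the statement demands), which is the one thing requiring genuine care in the choice of $\kappa$.
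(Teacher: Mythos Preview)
Your overall strategy—factor $\Phi_{\textup{rand},z}(u)$ over primes and bound each factor via $|\ex e^{i\theta\X(p)}|\leq\exp\bigl(-\tfrac12(1-\cos\theta)\bigr)$—is sound and close in spirit to the paper's. But there is a genuine gap in the uniformity as $z\to\tfrac12^+$, precisely at the point you flagged as the main obstacle, and your proposed fix (choosing the window carefully) does not close it.

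The single window $[P,\eta P]$ on which $\theta_p\asymp 1$ can be far too small. Take $z=\tfrac12+\tfrac1N$ with $N$ large and $|u|=N=V_z$. Then $\theta_p=2\pi|u|(\log p)/(V_zp^z)=2\pi(\log p)/p^z$, and with $z\approx\tfrac12$ the condition $\theta_p\asymp 1$ forces $p^{1/2}\asymp\log p$, i.e.\ $p=O(1)$. Your window then contains only $O(1)$ primes and yields merely $|\Phi_{\textup{rand},z}(u)|\leq e^{-O(1)}$. But here $1/z\approx 2$ and $2-1/z\approx 0$, so the lemma demands a bound of order $\exp(-cN^2)$. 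No choice of $\kappa$ in $P=|u|^{\kappa/z}$ repairs this: with $|u|=V_z$ one always has $\theta_P\asymp(\log P)/P^z$, which is $\asymp 1$ only for bounded $P$.

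The paper instead restricts to the \emph{tail} $p>U$ with $U=\max\bigl(e^{AV_z},(A|u|\log|u|)^{1/z}\bigr)$ and Taylor-expands each factor to second order (equivalently, uses $1-\cos\theta\gg\theta^2$ for small $\theta$ rather than $1-\cos\theta\gg 1$ for $\theta\asymp 1$). This gives
\[
|\Phi_{\textup{rand},z}(u)|\leq\exp\Bigl(-c\,\frac{u^2}{V_z^2}\sum_{p>U}\frac{(\log p)^2}{p^{2z}}\Bigr)\ll\exp\Bigl(-c'\,\frac{u^2}{U^{2z-1}}\Bigr),
\]
and the floor $U\geq e^{AV_z}$ (so $U^{2z-1}\leq e^{2A}$) is exactly what rescues the regime $|u|\lesssim V_z$: there one obtains $\exp(-cu^2)$, which dominates the stated bound since $u^2\geq |u|^{1/z}/(\log|u|)^{2-1/z}$ for $|u|\geq 2$. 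Your argument becomes equivalent to the paper's once you replace the window by this tail and use the quadratic lower bound for $1-\cos$.

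A smaller point: you cannot ``drop'' the prime-square terms by bounding their factor by $1$, because $\X(p^2)=\X(p)^2$ is not independent of $\X(p)$—there is no separate factor in the product to discard. The paper handles this cleanly by summing the geometric series, writing $\Lrand(z)=\sum_p\X(p)(\log p)/(p^z-\X(p))$ so that all prime powers are absorbed into each prime's factor; the resulting perturbation to $\theta_p$ is $O(\theta_p/p^z)$ and is swallowed by the error terms once $p>U$.
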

\begin{proof}
Let $A$ be a suitably large constant. Since $|\Phi_{\textup{rand}, z}(u)|\leq 1$ for all real numbers $u$, we may assume that $|u|>A$. First, note that 
$$\Lrand(z)= \sum_{n=1}^{\infty}\frac{\Lambda(n)\X(n)}{n^z}=\sum_{p} \log p\sum_{k=1}^{\infty} \frac{\X(p)^k}{p^{kz}}= \sum_{p} \frac{\X(p)\log p}{p^{z}-\X(p)}, 
$$ 
and hence 
$$ \Phi_{\text{rand}, z}(u)
    =\prod_{p>2} \ex\bigg[\exp\bigg(2\pi i u\frac{\X(p)\log p}{V_z(p^z-\X(p))}\bigg)\bigg],$$
    since $\{\X(p)\}_{p\: \text{prime}}$ are independent and $\X(2)=0.$
Now for any odd prime $p$, by Taylor's expansion, we have 
\begin{align*}
&\exp\bigg(2\pi i u\frac{\X(p)\log p}{V_z(p^z-\X(p))}\bigg) \\
&=  1+ 2\pi i u\frac{\X(p)\log p}{V_z(p^z-\X(p))}- 2\pi^2 u^2 \frac{\X(p)^2(\log p)^2}{V_z^2(p^z-\X(p))^2}+ O\left(|u|^3\frac{(\log p)^3}{V_z^3p^{3z}}\right) \\
& = 1+ 2\pi i u\frac{\X(p)\log p}{V_z p^z} +2\pi i u \frac{\X(p)^2\log p}{V_z p^{2z}} - 2\pi^2 u^2 \frac{\X(p)^2(\log p)^2}{V_z^2 p^{2z}} \\
& \quad + O\left(|u|\frac{\log p}{V_z p^{3z}}+|u|^3\frac{(\log p)^3}{V_z^3p^{3z}}\right).
\end{align*}
Since $\ex (\X(p))=0$ and $\ex(\X(p)^2)=1-1/(p+1)$ we get
$$\ex\left[\exp\bigg(2\pi i u\frac{\X(p)\log p}{V_z(p^z-\X(p))}\bigg)\right]
 =  1 - 2\pi^2 u^2 \frac{(\log p)^2}{V_z^2 p^{2z}} + O\left(|u|\frac{\log p}{V_z p^{2z}}
+|u|^3\frac{(\log p)^3}{V_z^3p^{3z}}\right).
$$
Let 
$$ U= \max\left(e^{AV_z}, (A|u|\log |u|)^{1/z}\right).$$
Then we have
\begin{equation}\label{Eq.ExpansionExpectation}
\begin{aligned}
|\Phi_{\text{rand}, z}(u)|  &\leq \prod_{p\geq U}\left|\ex\bigg[\exp\bigg(2\pi i u\frac{\X(p)\log p}{V_z(p^z-\X(p))}\bigg)\bigg]\right|
\\
& \leq 
\exp\left(-2\pi^2 \frac{u^2}{V_z^2}\sum_{p>U}\dfrac{(\log p)^2}{p^{2z}} + O\bigg(\frac{|u|}{V_z}\sum_{p>U}\frac{\log p}{p^{2z}}+\dfrac{|u|^3}{V_z^3}\sum_{p>U} \frac{(\log p)^3}{p^{3z}}\bigg)\bigg)\right).
\end{aligned}
\end{equation}
Since $U\geq e^{AV_z}$ (and $A$ is suitably large) it follows by partial summation and the prime number theorem  that 
$$ \sum_{p>U} \frac{(\log p)^2}{p^{2z}} \asymp \frac{V_z \log U}{U^{2z-1}}, \ \ \sum_{p>U} \frac{\log p}{p^{2z}}\asymp \frac{V_z}{U^{2z-1}}, \ \text{ and } \sum_{p>U} \frac{(\log p)^3}{p^{3z}} \asymp \frac{(\log U)^2}{U^{3z-1}}. $$ Inserting these estimates in \eqref{Eq.ExpansionExpectation} implies that 
\begin{align*}
|\Phi_{\text{rand}, z}(u)| &\ll \exp\left(-C_1  \frac{u^2\log U}{V_z U^{2z-1}}\left(1+ O\left(\frac{V_z}{|u|\log U}+ \frac{|u|\log U}{V_z^2 U^z}\right)\right)\right)\\
& \ll \exp\left(-\frac{C_1}{2}  \frac{u^2\log U}{V_z U^{2z-1}}\right)\ll \exp\left(-\frac{C_1}{2}  \frac{u^2}{U^{2z-1}}\right),
\end{align*}
for some positive constant $C_1$, by our choice of $U$.  The result follows upon noting that $U^{2z-1}\asymp_A 1$ if $U=e^{AV_z}$, and $U^{2z-1}\asymp_A (|u|\log |u|)^{2-1/z}$ otherwise.   
\end{proof}
It follows from Lemma \ref{Lemma: decay of characteristic of random model} that uniformly in $1/2<z\leq 1$ we have 
$$ \Phi_{\textup{rand}, z}(u) \ll \exp\left(-C_0\frac{|u|}{\log|u|}\right)$$ 
for all $u\in \mathbb{R}$. Thus, by Fourier inversion, the random variable $\Lrand(z)/V_z$ is  absolutely continuous, and has a uniformly bounded density function. In particular, for any $\ep>0$ we have
\begin{equation}
\label{Eq.BoundSmallRandom}\pr\left(\Lrand(z)/V_z\in [-\ep, \ep]\right)\ll \ep,
\end{equation}
where the implied constant is absolute. We are now ready to prove Theorem \ref{Theorem: Discrepancy}.

\begin{proof}[Proof of Theorem \ref{Theorem: Discrepancy}]
Let
  \begin{align*}
      T(z):=c_1\sqrt{\dfrac{\log x}{V_z\log (\log x/V_z)}},
  \end{align*}
  where $c_1$ in the constant in the statement of Proposition \ref{Prop: Characteristic}.
Since $\Lrand(z)/V_z$ has a uniformly bounded density function, it follows from  the Berry-Esseen Theorem (see Theorem 7.16 of \cite{Ten15}) that
  \begin{align*}
      D(z) 
      \ll \dfrac{1}{T(z)} + \int_{-T(z)}^{T(z)}\dfrac{|\Phi_{x, z}(u)-\Phi_{\text{rand}, z}(u)|}{u}\: du.
  \end{align*}
  Note that if $|u|\leq 1/T(z)$,  then by Taylor's expansion and the Cauchy-Schwarz inequality, we have
  \begin{align*}
      \Phi_{x, z}(u)-\Phi_{\text{rand}, z}(u) &=\dfrac{1}{|\mc{D}_z(x)|}\sum_{d\in \mc{D}_z(x)}\exp\bigg(2\pi iu\dfrac{\mc{L}_d(z)}{V_z}\bigg)-\ex\bigg[\exp\bigg(2\pi iu\dfrac{\mc{L}_{\textup{rand}}(z)}{V_z}\bigg)\bigg]\\
      &\ll |u|\bigg(\dfrac{1}{|\mc{D}_z(x)|}\sum_{d\in \mc{D}_z(x)}\dfrac{|\mc{L}_d(z)|}{V_z} + \ex\bigg(\dfrac{|\mc{L}_{\text{rand}}(z)|}{V_z}\bigg)\bigg)\\
      &\leq |u|\bigg(\bigg(\dfrac{1}{|\mc{D}_z(x)|}\sum_{d\in \mc{D}_z(x)}\dfrac{|\mc{L}_d(z)|^2}{V_z^2}\bigg)^{1/2} + |u|\ex\bigg(\dfrac{|\mc{L}_{\text{rand}}(z)|^2}{V_z^2}\bigg)^{1/2}\bigg)\\
      &\ll |u|
  \end{align*}
  by Lemma \ref{Lemma: Moment Bound}. Therefore, we obtain
  \begin{align*}
      D(z)\ll \dfrac{1}{T(z)} + \int_{1/T(z)\leq |u|\leq T(z)}\dfrac{|\Phi_{x, z}(u)-\Phi_{\text{rand}, z}(u)|}{u}\: du.
  \end{align*}
  By invoking Proposition \ref{Prop: Characteristic}, we infer that
  \begin{align*}
      \int_{1/T(z)\leq |u|\leq T(z)}\dfrac{|\Phi_{x, z}(u)-\Phi_{\text{rand}, z}(u)|}{u}\: du
      \ll T(z) \frac{V_z^4}{(\log x)^4}\ll \dfrac{1}{T(z)},
  \end{align*}
which completes the proof.
\end{proof}


\section{Real zeros of $L'(s, \chi_d)$ away from the central point}\label{Section. Thm main away from the central point}
 
The goal of this section is to establish the following result, which is our first step towards proving Theorem \ref{Thm:Main}.

\begin{thm}\label{Thm:RealZerosAway1/2} Let $\nu$ be a positive function such that $\nu(x)\leq \log\log  x$ for large $x$, and $\nu(x)\to \infty$ as $x\to \infty$. 
For all $d\in \Df$ except for a set of cardinality $\ll x\sqrt{\log \nu(x)}/\sqrt{\nu(x)}$, we have
$$ R_d\left(\frac12+ \frac{\nu(x)}{\log x}, 1\right)\ll (\log\log x)(\log\log\log x).$$
\end{thm}

\begin{proof} For $1\leq j\leq J:=\lfloor\frac{1}{\log 3} (\log\log x-\log \nu(x))\rfloor$, we define 
$$z_j:=\frac{1}{2}+\frac{1}{3^j}, \quad r_j:= \frac{1}{2\cdot 3^j}, \quad  \text{ and } R_j:=\frac54 r_j.$$

\begin{figure}[h]
  \centering

\begin{tikzpicture}[>=stealth, scale=24]
   \draw[->] (0.49,0) -- (1.08,0) node[right] {};

    \draw[black] (0.5,-0.2) -- (0.5,0.2) node[above] {$\frac{1}{2}$};
    \draw[black] (1,-0.2) -- (1,0.2) node[above] {1};

    \draw[black, thick, dashed] (0.512,-0.2) -- (0.512,0.2)
        node[above] {$t_x$};

    \draw[blue] (5/6,0) circle (1/6);
    \draw[blue, dashed] (5/6,0) circle (5/24);


    \filldraw[black] (5/6,0) circle (0.05pt) node[below] {$z_1$};
    \filldraw[black] (11/18,0) circle (0.05pt) node[below] {$z_2$};
    
     \filldraw[black] (29/54,0) circle (0.05pt) node[below] {$z_3$};

    \foreach \j [evaluate=\j as \n using int(\j)] in {1, 2, 3} {
        \pgfmathsetmacro{\centerx}{0.5 + 1/(3^\j)}
        \pgfmathsetmacro{\radiusa}{1/(2*3^\j)}
        \pgfmathsetmacro{\radiusb}{5/(8*3^\j)}

        \draw[blue] (\centerx,0) circle (\radiusa);
        \draw[red, dashed] (\centerx,0) circle (\radiusb);


            \path (\centerx,0) ++(129:{\radiusa+0.0001}) node[blue, font=\footnotesize, anchor=north] {$\mathcal{C}_{\n}$};
        \path (\centerx,0) ++(120:{\radiusb + 0.001}) node[red, font=\footnotesize, anchor=south] {$\widetilde{\mathcal{C}}_{\n}$};
    }

\end{tikzpicture}

\caption{Circles covering $[t_x, 1]$, where $t_x=1/2+\nu(x)/\log x$.}\label{Figure cicle 1}
\end{figure}
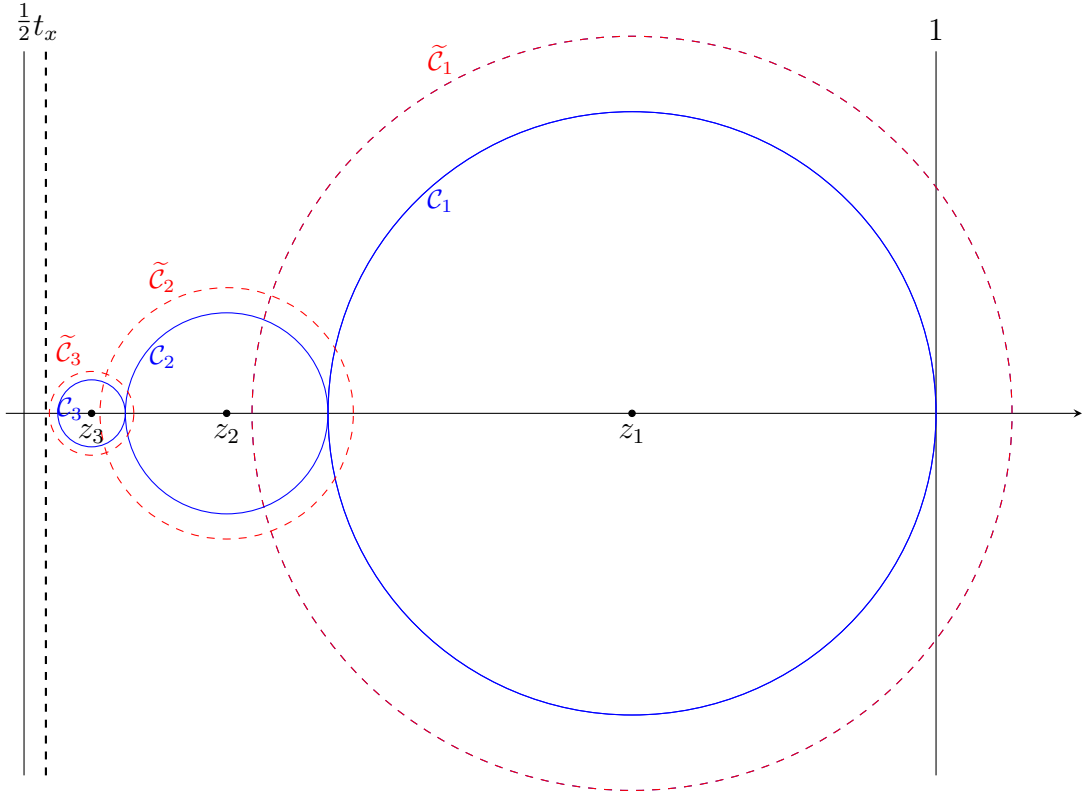
We also let $\mathcal{C}_j$ and $\widetilde{\mathcal{C}}_j$ be the concentric circles of center $z_j$ and radii $r_j$ and $R_j$, respectively (see Figure \ref{Figure cicle 1}). One can observe that 
$$ \mathcal{I}:=\left[ \frac12+ \frac{\nu(x)}{\log x}, 1\right] \subset \bigcup_{j=1}^J \{ z\in \mathbb{C} : |z-z_j|\leq r_j\}. $$
Let $\Ds\subset \D(x)$ be the set of fundamental discriminants such that $L(s, \chi_d)$ has no zeros in the  discs $|z-z_j|\leq \frac{7}{4}r_j$ for all $j\leq J$. Since for each $j\leq J$, such a disc  is contained in the square $\{z: z_j- 7r_j/4\leq \re(z)\leq z_j+ 7r_j/4 \text{ and } |\im(z)|\leq 7r_j/4\}$, it follows from Lemma \ref{Lemma: zero density} that for some absolute positive constant $c_8$ we have  
\begin{equation}\label{Eq.Exceptionala1}
 |\Df\setminus \Ds|\ll x\log x\sum_{j=1}^J \frac{x^{-c_8/3^j}}{3^j} \ll x\sum_{j=1}^J  x^{-c_8/(2\cdot 3^j)}\ll x\exp(-c_{9} \nu(x)),
 \end{equation}
for some positive constant $c_{9}$, since $(\log x)3^{-j}\ll \exp(\frac {c_9}{2}(\log x)3^{-j}), $ for all $j\le J.$

Let $d\in \Ds$. Recall that $\mathcal{L}_d(s)=\frac{-L^\prime}{L}(s, \chi_d)$. Then $\Ld$ is analytic on the open disc $|z-z_j|< 7r_j/4$ for all $j\leq J$, and moreover the number of zeros of $\Ld(s)$ in $\mathcal{I}$ is bounded by 
\begin{equation}\label{Eq.SumZerosDiscs}
\sum_{j=1}^J N_j(\Ld),
\end{equation}
where $N_j(\Ld)$ is the number of zeros of $\Ld(s)$ inside the circle $\mathcal{C}_j$. Since $\Ld$ is analytic inside $\widetilde{\mathcal{C}}_j$, it follows from Jensen's formula that 
\begin{equation}\label{Eq.Jensen}
N_j(\Ld) \leq \frac{\log\big(M_{j, d}/\Ld(z_j)\big)}{\log (R_j/r_j)}=\frac{1}{\log (5/4)}\Big(\log\big(M_{j, d}/V_j\big)- \log\big(|\Ld(z_j)|/V_j\big)\Big),
\end{equation}
where 
$$ M_{j, d}:= \max_{s\in \widetilde{\mathcal{C}}_j} |\Ld(s)|, \textup{ and } V_j:=\frac{1}{z_j-1/2}= 3^j.$$
Note that we  normalized both $M_{j, d}$ and $\Ld(z_j)$ by ``the standard deviation'' $V_j$.
Therefore, in order to bound the sum on \eqref{Eq.SumZerosDiscs} we would like to show that for almost all fundamental discriminants $d\in \Ds$ we have 

\begin{itemize}

\item[1.] $\max_{j\leq J}  M_{j, d}/V_j$ is not too large (namely $\ll (\log\log x)^2$ say).

\smallskip

\item[2.] 
$\min_{j\leq J} |\Ld(z_j)|/V_j$ is not too small (namely $\gg (\log\log x)^{-2}$ say).

\smallskip

\end{itemize}

 We start by handling the first condition. Let $1\leq j\leq J$.  Since $\Ld(s)$ is analytic on the open disc of center $z_j$ and radius $\frac75 R_j$ for all $d\in \Ds$, it follows from Cauchy's formula that
 $$\Ld(s)^{2}= \frac{1}{2\pi i} \int_{|z-z_j|=\frac76R_j} \frac{\Ld(z)^{2}}{z-s} dz, $$
 for all $s\in \widetilde{\mc{C}}_j.$ This implies 
\begin{equation}\label{Eq.Cauchy}
 M_{j, d}^{2}=\max_{s\in \widetilde{\mc{C}}_j} |\Ld(s)|^{2} \ll V_j \int_{|z-z_j|=\frac76R_j} |\Ld(z)|^{2} |dz|, 
 \end{equation}
 since $|z-s|\geq |z-z_j|-|s-z_j| = R_j/6 \asymp 1/V_j$. 
 Let $L$ be a positive parameter to be chosen, and define $\mc{E}_1(x)$ to be the set of fundamental discriminants $d\in \Ds$ such that $\max_{j\leq J} M_{j, d}/V_j\geq L$. The proportion of $d\in \mc{E}_1(x)$  is 
\begin{equation}\label{Eq.BoundMaxMoments}
\begin{aligned}
 \leq \sum_{j=1}^J\frac{1}{(L V_j)^{2}}\frac{1}{|\Ds|}\sum_{d\in \Ds} M_{j, d}^{2}
 &\ll \sum_{j=1}^J\frac{1}{L^{2} V_j}\int_{|z-z_j|=\frac76R_j} \Bigg(\frac{1}{|\Ds|}\sum_{d\in\Ds}|\Ld(z)|^{2}\Bigg) |dz|,\\
 & \ll \sum_{j=1}^J\frac{1}{L^{2} V_j}\int_{|z-z_j|=\frac76 R_j}V_z^2  |dz|
 \end{aligned}
 \end{equation}
 by \eqref{Eq.Cauchy}, Lemma \ref{Lemma: Moment Bound} and the fact that $|\Ds|\asymp x$. Furthermore, since $\int_{|z-z_j|=\frac76R_j}|dz|\asymp 1/V_j$ and $V_z\leq 4V_j$  for all complex numbers $z$ with $|z-z_j|=\frac76 R_j$ (since $\re(z)\geq z_j- \frac{7}{6} R_j\geq \frac 12+ \frac{1}{4V_j}$), we deduce that the right hand side of \eqref{Eq.BoundMaxMoments} is 
 $ \ll J/L^2.$
 We now choose $L=(\log\log x)^2$. This implies that the proportion of fundamental discriminants $d\in \mc{E}_1(x)$ is 
 \begin{equation}\label{Eq.Exceptionala2} \ll J (\log\log x)^{-4}\ll (\log\log x)^{-3}.
 \end{equation}

 We now handle the second condition. Let $\ep=1/(\log\log x)^2$ and $\mc{E}_2(x)$ be the set of fundamental discriminants $d\in \Ds$ such that $\min_{j\leq J} |\Ld(z_j)/V_j| \leq \ep$. Then by Theorem \ref{Theorem: Discrepancy} we obtain 
\begin{equation}\label{Eq.Exceptionala3}
\begin{aligned}
\frac{|\mc{E}_2(x)|}{|\Ds|}&= \frac{1}{|\Ds|}\bigg|\bigcup_{j=1}^J\Big\{d\in \Ds : \Ld(z_j)/V_j \in [-\ep, \ep] \Big\}\bigg|\\
& \leq \sum_{j=1}^J
\frac{1}{|\Ds|}\bigg|\Big\{d\in \Ds : \Ld(z_j)/V_j \in [-\ep, \ep] \Big\}\bigg|
\\
& \ll \sum_{j=1}^J \left(\pr\big(\Lrand(z_j)/V_j \in [-\ep, \ep]\big) + \frac{\sqrt{V_j\log\big(\log x/V_j \big)}}{\sqrt{\log x}}\right)\\
&\ll \frac{1}{\log\log x}+ \sum_{j=1}^J\frac{\sqrt{3^j\log\big(\log x/3^j \big)}}{\sqrt{\log x}},
\end{aligned}
\end{equation}
by \eqref{Eq.BoundSmallRandom}. To bound the sum over $j$ we split it in two parts $1\leq j\leq J_0$ and $J_0<j\leq J$, where $J_0= \lfloor\frac{1}{\log 3} (\log\log x-4\log \nu(x))\rfloor$. In the first part we use that $\log(\log x/3^j)\leq (\log x/3^j)^{1/2}$, while for the second we use that $\log(\log x/3^j)\ll \log \nu(x). $ This implies 
\begin{align*}
\sum_{j=1}^J\frac{\sqrt{3^j\log\big(\log x/3^j \big)}}{\sqrt{\log x}}\ll \sum_{1\leq j\leq J_0}\left(\frac{3^j}{\log x}\right)^{1/4}+ \sqrt{\frac{\log \nu(x)}{\log x}}\sum_{J_0<j\leq J} 3^{j/2} \ll \sqrt{\frac{\log \nu(x)}{\nu(x)}}.
\end{align*}
Inserting this bound in \eqref{Eq.Exceptionala3} shows that $|\mc{E}_2(x)|\ll x \sqrt{\log \nu(x)}/\sqrt{\nu(x)}$.  To finish the proof, we let $\D_2(x)= \Ds\setminus(\mc{E}_1(x)\cup \mc{E}_2(x)).$ Then combining our estimate on $\mc{E}_2(x)$ with \eqref{Eq.Exceptionala1} and \eqref{Eq.Exceptionala2} we deduce that 
$$ |\Df\setminus \D_2(x)|\ll x\sqrt{\frac{\log \nu(x)}{\nu(x)}}, $$
and for all $d\in \D_2(x)$ we have  $\max_{j\leq J}  M_{j, d}/V_j \leq (\log\log x)^2$ and   
$\min_{j\leq J} |\Ld(z_j)|/V_j\geq (\log\log x)^{-2}.$ Thus, if $d\in \D_2(x)$  then \eqref{Eq.Jensen}  implies that the number of real zeros of $\Ld$ on $\mathcal{I}$ is 
$$ \ll J (\log \log \log x)\ll (\log\log x)(\log\log \log x),$$
as desired.
\end{proof}

\section{Real zeros of $L'(s, \chi_d)$ near the central point}\label{Sec. Thm main near the central point}
 In this section we prove the following result, which together with Theorem \ref{Thm:RealZerosAway1/2} imply Theorem \ref{Thm:Main}.
\begin{thm}\label{Thm:ZerosNearCentral}
Let $
s_0 = 1/2 + (\log_2 x)/(\log x\log_3 x),
$ and put
$r = 2s_0 - 1.
$
Then for all $d \in \D(x)$, except for a set of cardinality
$
\ll x \log_3 x/\sqrt{\log\log x},
$
the number of zeros of $L'(s,\chi_d)$ inside the circle centered at $s_0$
with radius $r$ is
$
\ll \log\log x.
$
\end{thm}
To prove this result we will need two technical results. The first is an upper bound for the second moment of $L^\prime(s, \chi_d)$ at points $s$ near $1/2$.

\begin{pro}\label{Pro:SecondMomentL'}
Let $\nu$ be a positive function such that $\nu(x)\leq \log\log x$ for large $x$ and $\nu(x)\to\infty$ as $x\to \infty$. 
Then, uniformly for all complex numbers $\alpha$ such that $|\alpha|\le \nu(x)/\log x$ we have 
$$
\sum_{d\in\mathcal D(x)} \left|L'\Big(\frac12+\alpha,\chi_d\Big)\right|^2
\;\ll\;
x(\log x)^{5}e^{4\nu(x)},
$$
where the implicit constant is absolute.
\end{pro}
\begin{rem}
 It is worth emphasizing that the exponent of $\log x$ in the above upper bound
is best possible.
Indeed, Jutila \cite{Ju81} proved that the second moment of $L(1/2,\chi_d)$
is asymptotic to $c_0(\log x)^3$ for some positive constant $c_0$.
Moreover, the ``recipe'' for computing moments of $L$-functions developed
in \cite{CFKRS} predicts that one should gain an additional factor of
$(\log x)^2$ in passing from the second moment of $L(1/2,\chi_d)$ to that of $L'(1/2,\chi_d)$
(see also \cite{CRS06}, where the authors conjecture  asymptotic formulas for the  moments of $|\zeta'(1/2+it)|$).
This prediction is further consistent with random matrix theory, since our
family is of symplectic type.
\end{rem}
To prove Proposition \ref{Pro:SecondMomentL'}, one may proceed in a classical way,
using the approximate functional equation for $L(s,\chi_d)$ (which can then be
differentiated) together with the Poisson summation formula, following earlier
works on low moments of $L(1/2,\chi_d)$ (see, for example, \cite{Ju81} and \cite{Soun00}).
However, since we only aim for an upper bound, we found a considerably more streamlined
proof by relying instead on Armon’s bound (see Lemma \ref{Lem:Armon} above) for the second moment of character sums.

\begin{proof}[Proof of Proposition \ref{Pro:SecondMomentL'}]
Let $d\in \D(x)$. For $\re(s)>0$, we have by partial summation
\begin{equation}\label{Eq:mellinL}
L(s,\chi_d)
=
s\int_1^\infty \frac{S_d(u)}{u^{1+s}} \,du
\end{equation}
where 
$$
S_d(u):=\sum_{n\le u}\chi_d(n).
$$
Moreover, the integral defines an analytic function on the half plane $\re(s)>0$, since 
\begin{equation}\label{Eq:Polya_Vinogradov}
 S_d(u) \ll \sqrt{d}\log d\ll \sqrt{x}\log x,
\end{equation}
for all real numbers $u\geq 1$ by the Pólya--Vinogradov inequality. 
Differentiating both sides of \eqref{Eq:mellinL} with respect to $s$ yields
\begin{equation}\label{Eq:mellin-derivative}
L'(s,\chi_d)
=
\int_1^\infty \frac{S_d(u)}{u^{1+s}}\,du
-
s\int_1^\infty \frac{S_d(u) \log u}{u^{1+s}}\,du.
\end{equation}
Let 
$U=x^2,$ and put $s=1/2+ \alpha$ and $\sigma=\re (s).$ 
Using \eqref{Eq:Polya_Vinogradov} we have
\begin{align*}
\int_U^\infty \frac{S_d(u)}{u^{1+s}}\,du
-
s\int_U^\infty \frac{S_d(u) \log u}{u^{1+s}}\,du
\ll
\sqrt x\log x \int_U^\infty \frac{\log u}{u^{1+\sigma}}\,du
\ll x^{-1/3},
\end{align*}
by our assumption on $\alpha$. Therefore, we deduce that 
$$ |L'(s, \chi_d)|\ll \int_1^U \frac{|S_d(u)|\log(u+1)}{u^{1+\sigma}} du+x^{-1/3}.$$
Summing over $d\in\D(x)$, expanding the square, and exchanging
summation and integration, we obtain
\begin{equation}\label{Eq:BoundSecondML'}
\sum_{d\in\mathcal D(x)} |L'(s,\chi_d)|^2
\ll 
\int_1^U\!\!\int_1^U
\frac{\log(u+1)\log(v+1)}{\,
u^{1+\sigma}v^{1+\sigma}}
\sum_{d\in\mathcal D(x)} |S_d(u)S_d(v)|
\,du\,dv + x^{1/3}.
\end{equation}
We now use Lemma \ref{Lem:Armon} and the Cauchy-Schwarz inequality to get
$$
\sum_{d\in\mathcal D(x)} |S_d(u)S_d(v)|
\le
\Big(\sum_{d\in \D(x)} |S_d(u)|^2\Bigr)^{1/2}
\Bigl(\sum_{d\in \D(x)}|S_d(v)|^2\Big)^{1/2}
\ll
x(uv)^{1/2}\log x, 
$$
for all $u, v\geq 1.$ Inserting this estimate  into \eqref{Eq:BoundSecondML'} we derive
\begin{align*}
\sum_{d\in\mathcal D(x)} |L'(s,\chi_d)|^2
&\ll
x\log x
\left(\int_1^U
\frac{\log(u+1)}{
u^{\frac{1}{2}+\sigma}}
\,du\right)^2+x^{1/3}\\
& \ll x(\log x) \,  U^{2\nu(x)/\log x}
\left(\int_1^U
\frac{\log(u+1)}{
u}
\,du\right)^2+x^{1/3}\\
& \ll x(\log x)^{5}e^{4\nu(x)},
\end{align*}
since $\sigma\geq 1/2-\nu(x)/\log x$. This completes the proof.
\end{proof}

Next, we establish the following large deviation result for $\log |L(s, \chi_d)|$ for $s$ close to the half-line.
\begin{pro}\label{Pro:DistribLogL}
Let $s_0=1/2+(\log_2 x)/(\log x\log_3 x)$. Define
\begin{align*}
\mathcal{D}_1(x):=\{d\in \mathcal{D}(x)\colon \log |L(s_0, \chi_d)|>(\log \log x)/4\}.
\end{align*}
Then, we have
\begin{align*}
|\mathcal{D}(x)\setminus \mathcal{D}_1(x)|\ll \dfrac{x}{\log \log x}.
\end{align*}
\end{pro}

\begin{proof}
  Let $y=\exp(4\log x\log_3 x/\log_2x)$ so that $s_0= 1/2+4/\log y$. We also put 
 \begin{align*}
\mathcal{D}_2(x)=\{d\in \mathcal{D}(x)\colon \sigma_{y, d}=1/2+4/\log y\},
\end{align*}
where   $\sigma_{y, d}$ is given by \eqref{Def. sigma_y, d}. By Lemma \ref{Lemma: exceptional d}, there exists a constant $C_0>0$ such that
\begin{align}\label{Eq: exceptional set i}
    |\mathcal{D}(x)\setminus\mathcal{D}_2(x)|\ll x\exp\bigg(-C_0\dfrac{\log_2 x}{\log_3 x}\bigg).
\end{align}
Let $d\in \mathcal{D}_2(x)$. Then, by \eqref{Eq.ApproxLogLDirichlet}, we have
\begin{align*}
\log |L(s_0, \chi_d)|=\sum_{n\leq y^3}\dfrac{\Lambda_{y, d}(n)}{n^{s_0}\log n} + O\bigg(\dfrac{1}{\log y}\bigg|\sum_{n\leq y^3}\dfrac{\Lambda_{y, d}(n)}{n^{s_0}}\bigg| + \dfrac{\log x}{\log y}\bigg).
\end{align*}
By the definition of $\Lambda_{y, d}$ from \eqref{Eq.DefLambdaYD}, we note that
\begin{align*}
\sum_{n\leq y^3}\dfrac{\Lambda_{y, d}(n)}{n^{s_0}\log n}=\sum_{n\leq y}\dfrac{\Lambda(n)\chi_d(n)}{n^{1/2}\log n} + \sum_{n\leq y}\dfrac{\Lambda(n)\chi_d(n)}{n^{1/2}\log n}(n^{-4/\log y}-1) + \sum_{y<n\leq y^3}\dfrac{\Lambda_{y, d}(n)}{n^{s_0}\log n}.
\end{align*}
By Mertens theorem, the first term in the right-hand side of the above expression can be simplified as
\begin{align*}
\sum_{n\leq y}\dfrac{\Lambda(n)\chi_d(n)}{n^{1/2}\log n}&=\sum_{p\leq y}\dfrac{\chi_d(p)}{p^{1/2}} + \dfrac{1}{2}\sum_{\substack{p\leq \sqrt{y}\\ p\nmid 2d}}\dfrac{1}{p} + O(1)\\
& =\sum_{p\leq y}\dfrac{\chi_d(p)}{p^{1/2}}+ \frac12\log \log x + O(\log_3x),
\end{align*}
since the contribution of  prime powers $p^k$ with $k\geq 3$ is bounded, and $\sum_{p| 2d} 1/p\ll \log_3 |d|.$
Therefore, for all $d\in \D_2(x)$ we have
\begin{align}\label{Eq:ApproxLogLD2}
\log |L(s_0, \chi_d)| &-\dfrac{1}{2}\log \log x\nonumber\\
= &\: \sum_{p\leq y}\dfrac{\chi_d(p)}{p^{1/2}} + \sum_{n\leq y}\dfrac{\Lambda(n)\chi_d(n)}{n^{1/2}\log n}(n^{-4/\log y}-1) + \sum_{y<n\leq y^3}\dfrac{\Lambda_{y, d}(n)}{n^{s_0}\log n}\\
&+ O\bigg(\dfrac{1}{\log y}\bigg|\sum_{n\leq y^3}\dfrac{\Lambda_{y, d}(n)}{n^{s_0}}\bigg| + \dfrac{\log\log  x}{\log_3 x}\bigg).\nonumber
\end{align}
Let $\D_3(x)$ and $\D_4(x)$ be the subsets of discriminants $d\in\D(x)$ such that 
$$
\bigg|\sum_{p\leq y}\dfrac{\chi_d(p)}{p^{1/2}} + \sum_{n\leq y}\dfrac{\Lambda(n)\chi_d(n)}{n^{1/2}\log n}(n^{-4/\log y}-1) + \sum_{y<n\leq y^3}\dfrac{\Lambda_{y, d}(n)}{n^{s_0}\log n}\bigg|\leq \frac{1}{5}\log\log x,
$$
and 
$$\frac{1}{\log y}\bigg|\sum_{n\leq y^3}\dfrac{\Lambda_{y, d}(n)}{n^{s_0}}\bigg|\leq \sqrt{\log\log x},$$
respectively. 
Then, by \eqref{Eq:ApproxLogLD2} we observe that 
$\D_2(x)\cap  \D_3(x)\cap \D_4(x) \subset \D_1(x)\cap \D_2(x).$ By Markov's inequality we have
\begin{align}\label{Eq: Markov app}
|\mathcal{D}(x)\setminus \D_3 (x)|\ll \dfrac{1}{(\log \log x)^2}\Sigma_1(x), 
\end{align}
and 
\begin{align}\label{Eq: Markov app2}
|\mathcal{D}(x)\setminus \D_4 (x)|\ll \dfrac{1}{\log \log x}\Sigma_2(x), 
\end{align}
where
\begin{align*}
\Sigma_1(x):=\sum_{d\in \mathcal{D}(x)}\bigg|\sum_{p\leq y}\dfrac{\chi_d(p)}{p^{1/2}} + \sum_{n\leq y}\dfrac{\Lambda(n)\chi_d(n)}{n^{1/2}\log n}(n^{-4/\log y}-1) + \sum_{y<n\leq y^3}\dfrac{\Lambda_{y, d}(n)}{n^{s_0}\log n}\bigg|^2
\end{align*}
and
\begin{align*}
\Sigma_2(x):=\dfrac{1}{(\log y)^2}\sum_{d\in \mathcal{D}(x)}\bigg|\sum_{n\leq y^3}\dfrac{\Lambda_{y, d}(n)}{n^{s_0}}\bigg|^2.
\end{align*}
First we handle the sum $\Sigma_2(x)$. By applying Lemma \ref{Lem.LargeSieve} and arguing as in the proof of Lemma \ref{Lemma: Moment Bound}, we infer that
\begin{align*}
\Sigma_2(x)\ll \dfrac{1}{(\log y)^2}x(\log y)^2\ll x.
\end{align*}
Similarly, applying Lemmas \ref{Lem: LargeSieveKLM} and \ref{Lem.LargeSieve}, we derive
\begin{align*}
\Sigma_1(x) &\ll \sum_{d\in \mathcal{D}(x)}\bigg|\sum_{p\leq y}\dfrac{\chi_d(p)}{p^{1/2}}\bigg|^2 + \sum_{d\in \mathcal{D}(x)}\bigg|\sum_{n\leq y}\dfrac{\Lambda(n)\chi_d(n)}{n^{1/2}\log n}(n^{-4/\log y}-1)\bigg|^2  \\
&  \quad \quad \quad \quad \quad +\sum_{d\in \mathcal{D}(x)}\bigg|\sum_{y<n\leq y^3}\dfrac{\Lambda_{y, d}(n)}{n^{s_0}\log n}\bigg|^2\\
&\ll x\log \log x.
\end{align*}
Hence, combining the above estimates for $\Sigma_1(x)$ and $\Sigma_2(x)$ together with the relations \eqref{Eq: Markov app} and \eqref{Eq: Markov app2} we obtain
\begin{align*}
|\mathcal{D}(x)\setminus (\D_3 (x)\cap \D_4(x))|\ll \dfrac{x}{\log \log  x}.
\end{align*}
Finally, we apply the above estimate together with the relation \eqref{Eq: exceptional set i} to conclude that
\[|\mathcal{D}(x)\setminus \mathcal{D}_1 (x)|\ll \dfrac{x}{\log \log  x},\]
as desired.
\end{proof}

Given the above two propositions, we are now ready to prove Theorem \ref{Thm:ZerosNearCentral}.

\begin{proof}[Proof of Theorem \ref{Thm:ZerosNearCentral}]
Let $\mc{C}_1, \mc{C}_2$ and $\mc{C}_3$ be the circles centered at $s_0$ with radii
$r_1 = r$, $r_2 = 2r$ and $r_3 = 3r$ respectively.
By Jensen's formula, the number of zeros of $L'(s,\chi_d)$ inside $\mathcal{C}_1$ is
\begin{equation}\label{Eq:JenssenCriticalLine}
\quad
\le \frac{1}{\log 2}
\left[
\log\left(
\max_{s\in \mc{C}_2} |L'(s,\chi_d)|
\right)
-
\log\left(
|L'(s_0,\chi_d)|
\right)
\right].
\end{equation}
Let $\D_1(x)$ be the set of discriminants $d\in \D(x)$ such that
$$
\log |L(s_0,\chi_d)| > \frac14 \log\log x.
$$
Then it follows from Proposition \ref{Pro:DistribLogL} that
$
|\D(x)\setminus \D_1(x)| \ll x/\log\log x.
$
We now denote by $\D_5(x)$ the set of $d\in \D(x)$ such that
$$
\left|\frac{L'}{L}(s_0,\chi_d)\right|
>
\frac{\log x}{(\log\log x)^2}.
$$
Combining Theorem \ref{Theorem: Discrepancy} with Lemma \ref{Lemma: exceptional d} and \eqref{Eq.BoundSmallRandom} we obtain
\begin{align*}
|\D(x)\setminus \D_5(x)|\ll 
x\cdot \pr\Big(|\Lrand(s_0)/V_{s_0}|\leq  1/\log\log x\Big) + x\frac{\log_3 x}{\sqrt{\log\log x}}
\ll \frac{x\log_3 x}{\sqrt{\log\log x}}.
\end{align*}
Putting these estimates together, we deduce that for all
$d \in \D_1(x)\cap \D_5(x)$ we have
$$
|L'(s_0,\chi_d)|
=
\left|\frac{L'}{L}(s_0,\chi_d)\right|
\exp\left(\log |L(s_0,\chi_d)|\right)
>
(\log x)^{\frac65}.
$$
Next, we define $\D_6(x)$ to be the set of $d\in \D(x)$ such that
$$
\max_{s\in \mc{C}_2} |L'(s,\chi_d)| \le (\log x)^3.
$$
Let $s\in \mc{C}_2$. Since $L'(z,\chi_d)^2$ is entire, it follows from
Cauchy's formula that
$$
L'(s,\chi_d)^2
=
\frac{1}{2\pi i}
\int_{z\in \mc{C}_3}
\frac{L'(z,\chi_d)^2}{z-s}\,dz.
$$
Thus,
$$
\max_{s\in \mc{C}_2} |L'(s,\chi_d)|^2
\ll
\frac{1}{r}
\int_{z\in \mc{C}_3} |L'(z,\chi_d)|^2\,|dz|,
$$
since $|z-s| \geq |z-s_0|-|s-s_0|= r$ for all $s\in \mc{C}_2$ and $z\in \mc{C}_3$. Therefore, it follows from Markov's inequality and Proposition \ref{Pro:SecondMomentL'} that
\begin{align*}
|\D(x)\setminus \D_6(x)|
&\le
\frac{1}{(\log x)^6}
\sum_{d\in \D(x)}
\max_{s\in \mc{C}_2} |L'(s,\chi_d)|^2\\
&\ll
\frac{1}{(\log x)^6\,r}
\int_{z\in \mc{C}_3}
\sum_{d\in \D(x)} |L'(z,\chi_d)|^2\,|dz|\\
&\ll
x(\log x)^{-1+o(1)},
\end{align*}
since
$
\int_{z\in \mc{C}_3} |dz| \asymp r.
$ Finally, we let
$
\D_7(x) = \D_1(x)\cap \D_5(x)\cap \D_6(x).
$
Combining the above estimates we obtain
$$
|\D(x)\setminus \D_7(x)|
\ll
x\,\frac{\log_3 x}{\sqrt{\log\log x}},
$$
and for all $d\in \D_7(x)$ we have
$$
\log\left(
\max_{s\in \mc{C}_2} |L'(s,\chi_d)|
\right)
-
\log\left(
|L'(s_0,\chi_d)|
\right)
\leq 
2\log\log x.
$$
Inserting this bound in \eqref{Eq:JenssenCriticalLine} completes the proof.
\end{proof}

\section{The location of real zeros of $L'(s, \chi_d)$: Proof of Theorem \ref{Thm:ConditionalZerosNear1/2}}\label{Section:MomentsNearHalf}
Let $d\in \Df$ and recall that $\Ld(s)=-\frac{L'}{L}(s, \chi_d)$. The completed $L$-function associated to $L(s, \chi_d)$ is 
$$ \Lambda(s, \chi_d)= \left(\frac{d}{\pi}\right)^{s/2}\Gamma \left(\frac{s}{2}\right)L(s, \chi_d),$$
since $\chi_d(-1)=1$. The completed $L$-function satisfies the self-dual functional equation 
$$ \Lambda(s, \chi_d)= \Lambda(1-s, \chi_d),$$ and its zeros are precisely the non-trivial zeros of $L(s, \chi_d)$. We start by recording the following standard lemma. 

\begin{lem}\label{lem.Hadamard}
Let $s\in \mathbb{C}$ be such that $1/4<\re(s)\leq 5/4$, and $s$ does not coincide with a non-trivial zero of $L(s, \chi_d)$. Then we have 
\begin{equation}\label{Eq.Hadamard}
\Ld(s)= \frac{1}{2}\log\left(\frac{d}{\pi}\right)+\frac12 \frac{\Gamma'}{\Gamma} \left(\frac{s}{2}\right)- \sum_{\rho}\frac{1}{s-\rho},
\end{equation}
where the sum is over all non-trivial zeros of $L(s, \chi_d)$. We also have 
\begin{equation} \label{Eq.DerivativeL'}  
 (\Ld)^{\prime}(s)= \sum_{\rho}\frac{1}{(s-\rho)^2}+O(1). 
\end{equation}
\end{lem}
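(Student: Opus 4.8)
The plan is to obtain both identities from the Hadamard factorization of the completed $L$-function $\Lambda(s,\chi_d)$, exactly as in the classical partial-fraction expansion of $\zeta'/\zeta$ and of $L'/L$ for Dirichlet $L$-functions, and then to differentiate. Since $\chi_d$ is non-principal, $L(s,\chi_d)$ is entire; since $\chi_d(-1)=1$, the trivial zeros of $L(s,\chi_d)$ at $s=0,-2,-4,\dots$ cancel the poles of $\Gamma(s/2)$, so $\Lambda(s,\chi_d)$ is entire. Standard bounds on $\Gamma$ and on $L(s,\chi_d)$ in vertical strips give that $\Lambda(s,\chi_d)$ has order (at most) $1$, so Hadamard's factorization theorem (genus $1$) yields
$$\Lambda(s,\chi_d)=e^{A_d+B_d s}\prod_{\rho}\Big(1-\frac{s}{\rho}\Big)e^{s/\rho},$$
where $\rho$ runs over the non-trivial zeros of $L(s,\chi_d)$, $A_d,B_d$ are constants, and the product converges absolutely and locally uniformly off the zeros.

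Taking the logarithmic derivative gives $\frac{\Lambda'}{\Lambda}(s,\chi_d)=B_d+\sum_{\rho}\big(\frac{1}{s-\rho}+\frac1\rho\big)$, the sum now being absolutely convergent. On the other hand, differentiating the definition $\Lambda(s,\chi_d)=(d/\pi)^{s/2}\Gamma(s/2)L(s,\chi_d)$ directly gives $\frac{\Lambda'}{\Lambda}(s,\chi_d)=\frac12\log\frac{d}{\pi}+\frac12\frac{\Gamma'}{\Gamma}(s/2)-\Ld(s)$. Equating the two expressions,
$$\Ld(s)=\frac12\log\frac{d}{\pi}+\frac12\frac{\Gamma'}{\Gamma}\Big(\frac s2\Big)-B_d-\sum_{\rho}\Big(\frac{1}{s-\rho}+\frac1\rho\Big).$$
It then remains to show $B_d+\sum_\rho \frac1\rho=0$ when the zeros are summed in conjugate (equivalently $\rho\leftrightarrow 1-\rho$) pairs, which is also the sense in which $\sum_\rho \frac{1}{s-\rho}$ in \eqref{Eq.Hadamard} is to be read. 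This is where the self-dual functional equation $\Lambda(s,\chi_d)=\Lambda(1-s,\chi_d)$ enters: it forces the multiset of zeros to be invariant under $\rho\mapsto 1-\rho$, and since $\chi_d$ is real it is also invariant under $\rho\mapsto\bar\rho$; plugging $\frac{\Lambda'}{\Lambda}(s,\chi_d)=-\frac{\Lambda'}{\Lambda}(1-s,\chi_d)$ into the Hadamard expression and rearranging (legitimately, by the absolute convergence of the corrected sum) forces $2B_d+\sum_\rho\big(\frac1\rho+\frac1{1-\rho}\big)=0$, hence $B_d+\sum_\rho\frac1\rho=0$ symmetrically. Substituting gives \eqref{Eq.Hadamard}.

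For the second identity I would differentiate \eqref{Eq.Hadamard} term by term, which is justified by local uniform convergence. The term $\frac12\log(d/\pi)$ contributes $0$; the zero sum contributes $\sum_\rho\frac{1}{(s-\rho)^2}$, now absolutely convergent; and the archimedean factor contributes $\frac14(\Gamma'/\Gamma)'(s/2)=\frac14\sum_{n\ge 0}(s/2+n)^{-2}$. For $1/4<\re(s)\le 5/4$ we have $\re(s/2)>1/8$, so this last series is bounded absolutely and uniformly (the $n=0$ term by $\re(s/2)^{-2}<64$, the tail trivially), giving the stated $O(1)$, which is in particular independent of $d$.

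The only point needing genuine care is the bookkeeping in the zero sum: the Hadamard series converges absolutely only after the $1/\rho$ correction, whereas $\sum_\rho 1/(s-\rho)$ is merely conditionally convergent, so one must fix the symmetric summation convention and verify the cancellation of $B_d$ against $\sum_\rho 1/\rho$ using the functional equation. Everything else is routine; indeed the statement is classical enough that it could simply be cited.
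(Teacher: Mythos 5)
Your proof is correct and is essentially the same argument the paper invokes: the authors simply cite Eq.~(17) and (18) of Davenport, Chapter 12 for \eqref{Eq.Hadamard} (the former is the Hadamard partial-fraction expansion of $-L'/L$, the latter is precisely the identity $\re B(\chi)=-\sum_\rho\re(1/\rho)$ which, for the real self-dual character $\chi_d$, is exactly your cancellation of $B_d$ against $\sum_\rho 1/\rho$), and then obtain \eqref{Eq.DerivativeL'} by differentiating, with the $O(1)$ coming from the uniform boundedness of $\psi'(s/2)$ on the strip as you observe.
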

\begin{proof}
   The identity \eqref{Eq.Hadamard} follows from the Hadamard product formula for $\Lambda(s, \chi_d)$ (see for example Eq. (17) and (18) of \cite[Chapter 12]{Da80}). While the second estimate follows by taking the derivative of \eqref{Eq.Hadamard} with respect to $s$.
\end{proof}

Throughout this section we let 
\begin{align}\label{Eq: choice of nu and nu prime}
\nu(x):=\frac{(\log\log x)^{1/2-\delta}}{\log_3 x} \quad \text{ and } \quad \widetilde{\nu}(x):=\frac{(\log\log x)^{1/2+\delta}}{\sqrt{\log_3 x}}, 
\end{align}
where $0<\delta<1/2$ is the constant in Assumption 2. We also put $y=x^{4/\nu(x)}$ and let $\D_y(x)$ be the set in the statement of Lemma \ref{Lemma: exceptional d}, namely 
$$\mc{D}_y(x)=\{d\in \mc{D}(x)\colon
\sigma_{y, d} = 1/2+4/\log y\}.$$
Then it follows from Lemma \ref{Lemma: exceptional d} that $|\D(x)\setminus \D_y(x)|\ll xe^{-C_0\nu(x)}$, for some positive constant $C_0.$

\begin{pro}\label{pro.MomentsNear1/2}
Let $s_0=1/2+\nu(x)/\log x$ and $D_1$, $D_2$ be the discs of center $s_0$ and radii $R_1=s_0- 1/2+1/(2\widetilde{\nu}(x)\log x)$ and $R_2= s_0- 1/2+1/(\widetilde{\nu}(x)\log x)$ respectively. Let $\widetilde{\D}_0(x)$ be the set of discriminants $d\in \D_y(x)$ such that $L(s, \chi_d)$ is free of zeros inside the disc $D_2$.  Then, uniformly for all $s\in D_1$ we have 
   $$ \frac{1}{|\Df|}\sum_{d\in \widetilde{\D}_0(x)} |\Ld(s)|^2 \ll   (\log x\log\log x)^2.$$
\end{pro}

\begin{proof}
 Let $d\in \widetilde{\D}_0(x)$. Then $\sigma_{y, d}=s_0$. 
Moreover, by \eqref{Eq.ApproxL_dDirichlet}, we have 
\begin{equation}\label{Eq.BoundL'Primes}
\Ld(\sigma_{y, d}) \ll \log d +|A_d(y)|,
\end{equation}
where $$ A_d(y):= \sum_{n\leq y^3} \frac{\Lambda_{y, d}(n)}{n^{\sigma_{y, d}}},$$
 and $\Lambda_{y,  d}$ is given by \eqref{Eq.DefLambdaYD}.
Let $s\in D_1$. Since $L(s, \chi_d)$ is free of zeros inside the disc $D_2$ we get 
\begin{equation}\label{Eq.LLZeroHypothesis}
    \min_{\rho}|s-\rho|\gg \frac{1}{\widetilde{\nu}(x)\log x},
\end{equation}
where the minimum runs over the non-trivial zeros of $L(s, \chi_d)$. Furthermore, using  
the identity
$$ -\frac{1}{s-\rho}= -\frac{1}{\sigma_{y, d}-\rho}+\frac{s-\sigma_{y, d}}{(\sigma_{y, d}-\rho)^2}+\frac{(s-\sigma_{y, d})^2}{(\sigma_{y, d}-\rho)^2(s-\rho)}.
$$ together with \eqref{Eq.Hadamard} and   \eqref{Eq.DerivativeL'} we obtain\footnote{A similar estimate was derived by Selberg  for the Riemann zeta function, see Eq. (12) of \cite{SelUnp}.}
$$ 
\Ld(s)= \Ld(\sigma_{y, d})
+ (s-\sigma_{y, d}) (\Ld)^{\prime}(\sigma_{y, d})+ \sum_{\rho}\frac{(s-\sigma_{y, d})^2}{(\sigma_{y, d}-\rho)^2(s-\rho)}+O(1),
$$
where $\rho$ runs over the non-trivial zeros of $L(s, \chi_d)$. Therefore, combining \eqref{Eq.SelbergZerosPrimes}, \eqref{Eq.DerivativeL'},  \eqref{Eq.BoundL'Primes} and \eqref{Eq.LLZeroHypothesis} we get 
$$ |\Ld(s)|\ll (\log d+ |A_d(s)|)\left(1+\frac{|s-\sigma_{y, d}|}{\sigma_{y, d}-1/2}+ \frac{|s-\sigma_{y, d}|^2 \widetilde{\nu}(x)\log x}{\sigma_{y, d}-1/2} \right).$$
Since $|s-\sigma_{y, d}|\ll \nu(x)/\log x= \sigma_{y, d}-1/2$ we deduce that 
$$
|\Ld(s)|\ll (\log x+ |A_d(s)|) \log\log x.
$$
Finally, by the same calculation leading to \eqref{Eq.BoundMomentsLdSpecial} we infer from Lemma \ref{Lem.LargeSieve} that  
\begin{align*}
\frac{1}{|\mathcal{D}(x)|}\sum_{d\in \widetilde{\D}_0(x)}|\Ld(s)|^{2} & \ll  (\log x \log\log x)^{2}+ (\log\log x)^{2}\frac{1}{|\mathcal{D}(x)|}\sum_{d\in \mathcal{D}(x)}|A_d(s)|^{2}\\
&
\ll  (\log x\log\log x)^2.
\end{align*}
This completes the proof.
\end{proof}

\begin{proof}[Proof of Theorem \ref{Thm:ConditionalZerosNear1/2}]
Let $s_0= 1/2+\nu(x)/\log x$. We consider the concentric circles $\mc{C}_0$, $\mc{C}_1$, $\mc{C}_2$, and $\mc{C}_3$ of center $s_0$ and radii $r_0$, $r_1$,  $r_2$, and $r_3$ respectively, where $r_0=s_0-1/2$, $r_1= r_0+ 1/(4\widetilde{\nu}(x)\log x)$,  $r_2= r_0+ 1/(2\widetilde{\nu}(x)\log x)$, and $r_3= r_0+ 3/(4\widetilde{\nu}(x)\log x)$.      
 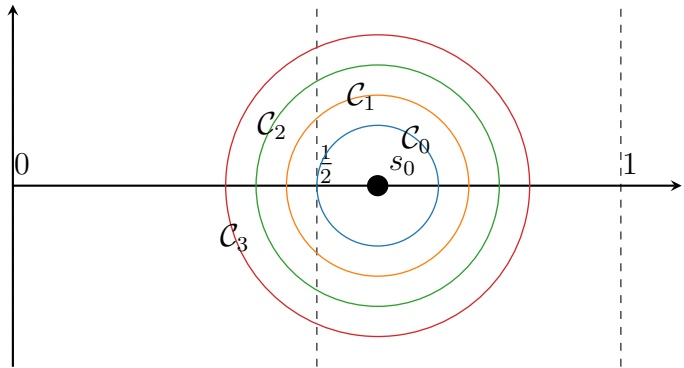
\begin{figure}[h]
  \centering

\begin{tikzpicture}[>=stealth,scale=8]

  \draw[->,thick] (0,0) -- (1.1,0) node[right] {};
  \draw[->,thick] (0,-0.3) -- (0,0.3) node[above] {};


  \foreach \x/\label in {0/0,0.5/{\frac{1}{2}},1/1}{
    \draw[black,dashed] (\x,-0.3) -- (\x,0.3); 
    \coordinate (top) at (\x,0.01);
    \node at ($(top) + (60:0.03)$) {$\label$};    
}

  \def\r{0.1}           
  \def\c{\r + 0.5}      
  \coordinate (C) at (\c,0);

  \def\rzero{\r}         
  \def\rone{\r + 0.05}    
  \def\rtwo{\r + 0.1}    
  \def\rthree{\r + 0.15}  

  \definecolor{col0}{RGB}{31,119,180} 
  \definecolor{col1}{RGB}{255,127,14} 
  \definecolor{col2}{RGB}{44,160,44}  
  \definecolor{col3}{RGB}{214,39,40}  

  \draw[line width=0.5pt,col0] (C) circle (\rzero);
  \draw[line width=0.5pt,col1] (C) circle (\rone);
  \draw[line width=0.5pt,col2] (C) circle (\rtwo);
  \draw[line width=0.5pt,col3] (C) circle (\rthree);

   Mark center
  \fill (C) circle (0.5pt) node[above right] {$s_0$};

   \node at ($(C) + (50:\rzero)$) {$\mathcal{C}_0$};
  \node at ($(C) + (100:\rone)$) {$\mathcal{C}_1$};
  \node at ($(C) + (150:\rtwo)$) {$\mathcal{C}_2$};
  \node at ($(C) + (200:\rthree)$) {$\mathcal{C}_3$};
  \end{tikzpicture}

  \caption{Four concentric circles $\mathcal{C}_0$, $\mathcal{C}_1$, $\mathcal{C}_2$, and $\mathcal{C}_3$.}
  \label{fig: circle 2}
\end{figure}

Let $\widetilde{\mc{D}}_0(x)$ be the set of discriminants in the statement of Proposition \ref{pro.MomentsNear1/2}.  Note that the disc of center $s_0$ and radius $r_0+ 1/(\widetilde{\nu}(x)\log x)$ is included in the rectangle $\mc{R}=\{s \in \mathbb{C}: 1/2-1/\log x\leq \re(s)\leq 1$ and $|\im(s)|\leq \sqrt{\log\log x}/\log x\}$, and that the intersection of this disc with the critical line is the vertical segment $\{1/2+it, \, |t| \leq \eta\}$, where $$\eta \asymp \frac{\sqrt{\nu(x)}}{\sqrt{\widetilde{\nu}(x)}\log x}= o\left(\frac{1}{(\log\log x)^{\delta}\log x}\right). $$ Therefore, by Assumptions 1 and 2 we have $|\D(x)\setminus \widetilde{\mc{D}}_0(x)|= o(x)$. Let $d\in \widetilde{\mc{D}}_0(x)$. Then $\Ld$ is analytic inside the circle $\mc{C}_3$ and hence by Jensen's formula the number of real zeros of $\Ld$ in the interval $[1/2, 1/2+\nu(x)/\log x]$ is bounded by
\begin{equation}
\label{Eq.JensenNear1/2}
\begin{aligned}
&\frac{\log\big(\max_{s\in \mathcal{C}_1}|\Ld(s)|/|\Ld(s_0)|\big)}{\log (r_1/r_0)}\\
&=\frac{1}{\log (r_1/r_0)}\left(\log\big(\max_{s\in \mathcal{C}_1}|\Ld(s)|/\log x\big)- \log\big(|\Ld(s_0)|/\log x\big)\right),
\end{aligned}
\end{equation} 
since $[1/2, 1/2+\nu(x)/\log x]\subset \{z\in \mathbb{C}: |z-s_0|\leq r_0\}.$ 
Moreover, by Cauchy's formula, for all $s\in \mathcal{C}_1$, we have
$$\Ld(s)^{2}= \frac{1}{2\pi i} \int_{z\in \mc{C}_2} \frac{\Ld(z)^{2}}{z-s} dz. $$
This implies 
\begin{equation}\label{Eq.Cauchy2}
\max_{s\in \mc{C}_1} |\Ld(s)|^{2} \ll \widetilde{\nu}(x)\log x \int_{z\in \mc{C}_2} |\Ld(z)|^{2} |dz|, 
 \end{equation}
 since $|z-s|\geq r_2-r_1= 1/(4\widetilde{\nu}(x)\log x)$ for all $z\in \mc{C}_2$ and $s\in \mc{C}_1$. By  Proposition \ref{pro.MomentsNear1/2} we have 
\begin{equation}\label{Eq.SecondMomentNear1/2}
 \frac{1}{|\Df|}\sum_{d\in \widetilde{\D}_0(x)} |\Ld(z)|^{2} \ll  (\log x\log\log x)^2,
\end{equation}
 uniformly for all $z\in \mc{C}_2$.  Moreover, combining \eqref{Eq.Cauchy2} and \eqref{Eq.SecondMomentNear1/2} we get 
 \begin{equation}\label{Eq.BoundSecondMomentMax}\begin{aligned}
 \frac{1}{|\Df|}\sum_{d\in \widetilde{\D}_0(x)}\max_{s\in \mc{C}_1} |\Ld(s)|^{2} &\ll \widetilde{\nu}(x)\log x\int_{z\in \mc{C}_2} \frac{1}{|\Df|}\sum_{d\in \widetilde{\D}_0(x)}|\Ld(z)|^{2} |dz|\\
 & \ll (\log x)^2(\log\log x)^3,
\end{aligned}
\end{equation}
since $\int_{z\in \mc{C}_2}|dz|\asymp \nu(x)/\log x$. We now define $\mc{E}_3(x)$ to be the set of  discriminants $d\in \widetilde{\D}_0(x)$ such that $\max_{s\in \mathcal{C}_1}|\Ld(s)|/\log x\geq (\log\log x)^2$. By Markov's inequality and \eqref{Eq.BoundSecondMomentMax} we obtain
\begin{equation}\label{Eq.BoundExceptional1Near}
\frac{|\mc{E}_3(x)|}{|\Df|} \leq \frac{1}{ (\log x)^2(\log\log x)^4}\frac{1}{|\Df|}\sum_{d\in \widetilde{\D}_0(x)}\max_{s\in \mc{C}_1} |\Ld(s)|^{2}\ll \frac{1}{\log\log x}.
\end{equation}
Next, we let $\mc{E}_4(x)$ be the set of discriminants $d\in \widetilde{\D}_0(x)$ such $|\Ld(s_0)|/\log x\leq \ep/\nu(x)$ where $ \ep=1/\log\log x.$ Then it follows from Theorem \ref{Theorem: Discrepancy} together with \eqref{Eq.BoundSmallRandom} that 
\begin{align*}
\frac{|\mc{E}_4(x)|}{|\Df|}
& = 
\frac{1}{|\Df|}\bigg|\Big\{d\in \widetilde{\D}_0(x) : \Ld(s_0)/V_{s_0} \in [-\ep, \ep] \Big\}\bigg|
\\
& \ll \Big(\pr\big(\Lrand(s_0)/V_{s_0} \in [-\ep, \ep]\big)\Big) + \sqrt{\frac{\log \nu(x)}{\nu(x)} }
\ll \sqrt{\frac{\log \nu(x)}{\nu(x)} }.
\end{align*}
Finally, we let $\widetilde{\D}_1(x)=\widetilde{\D}_0(x)\setminus\big(\mc{E}_3(x) \cup  \mc{E}_4(x)\big)$. 
Then we deduce from the above that $|\D(x)\setminus \widetilde{\D}_1(x)|=o(x)$. Moreover, by \eqref{Eq.JensenNear1/2}, for all $d\in \widetilde{\D}_1(x)$, the number of zeros of $\Ld$ in the interval $[1/2, 1/2+\nu(x)/\log x]$ is 
$$ \ll \frac{\log_3x}{\log(r_1/r_0)}\ll \nu(x)\widetilde{\nu}(x)\log_3x \ll \frac{\log\log x}{\sqrt{\log_3 x}},
$$
by our choice of $\nu(x)$ and $\widetilde{\nu}(x)$ in \eqref{Eq: choice of nu and nu prime}. Combining this estimate with  \eqref{Eq.LowerBoundKLM} completes the proof. 
\end{proof}

\section*{Acknowledgments}

YL is supported by a junior chair of the Institut Universitaire de France.

\end{document}